\newcommand{\F}{\mathbb F}
\newcommand{\FF}{\mathbb F}
\newcommand{\CC}{\mathbb C}
\newcommand{\C}{\mathbb C}
\newcommand{\RR}{\mathbb R}
\newcommand{\Rc}{\mathcal R}
\newcommand{\FI}{\mathfrak I}
\newcommand{\FJ}{\mathfrak J}
\newcommand{\dps}{\displaystyle}
\def\rank{\mathop{\rm rank}\nolimits}
\def\adj{\mathop{\rm Adj}\nolimits}
\def\Adj{\mathop{\rm Adj}\nolimits}
\def\tr{\mathop{\rm tr}\nolimits}
\newcommand{\be}{\begin{equation}}
\newcommand{\ee}{\end{equation}}
\newcommand{\wt}{\widetilde}
\newcommand{\la}{\lambda}
\newtheorem{theorem}{Theorem}[section]
\newtheorem{proposition}[theorem]{Proposition}
\newtheorem{corollary}[theorem]{Corollary}
\newtheorem{lemma}[theorem]{Lemma}
\newtheorem{rem}[theorem]{Remark}
\title{On a formula of Thompson and McEnteggert for the adjugate matrix}
\author[kc]{Kenier Castillo\fnref{fnK}}
\ead{kenier@mat.uc.pt}
\author[iz]{Ion Zaballa\corref{cor}\fnref{fnI}}
\ead{ion.zaballa@ehu.es}
\address[kc]{
CMUC, Department of Mathematics, 
University of Coimbra,  
3001-501 Coimbra, Portugal.
}
\address[iz]{
Departamento de Matem\'atica Aplicada y EIO.
Universidad del Pa\'{\i}s Vasco (UPV/EHU).
Apdo. 644.
48080 Bilbao. Spain.
}
\begin{document}
\begin{abstract}
For an eigenvalue $\la_0$ of a Hermitian matrix $A$, the formula of Thompson and McEnteggert
gives an explicit expression of the adjugate of $\la_0 I-A$, $\Adj(\la_0I-A)$, in terms of eigenvectors of $A$ for
$\la_0$ and all its eigenvalues. %It has been shown recently that this formula can be adapted to hold
%for matrices over commutative rings. 
In this paper Thompson-McEnteggert's formula is
generalized to include any matrix with entries in an arbitrary field. In addition,  for any nonsingular
matrix $A$, a formula for the elementary divisors of $\Adj(A)$ is provided in terms of those of $A$.
Finally, a generalization of the eigenvalue-eigenvector identity and two applications of the
Thompson-McEnteggert's formula are presented.
\end{abstract}

\begin{keyword}
Adjugate, eigenvalues, eigenvectors, elementary divisors, rank-one matrices.
\MSC
15A18 \sep 15A15 

\end{keyword}

\maketitle

\section{Introduction}
Let $\Rc$ be a commutative ring with identity.
Following \cite[Ch. 30]{Gode05}, for a polynomial $p(\la))=\sum_{k=0}^n p_k\la^k\in\Rc[\la]$
its derivative is $p'(\la)=\sum_{k=1}^n kp_k\la^{k-1}$. Recall that if $X\in\Rc^{n\times n}$
is a square matrix of order $n$ with entries in $\Rc$ and $M_{ij}(X)$ is the minor obtained from
$X$ by deleting the
$i$th row and $j$th column then the \textit{adjugate} of $X$, $\Adj(X)$, is the matrix whose $(i,j)$ entry is
$(-1)^{i+j} M_{ji}(X)$; that is, $\Adj(X)=\begin{bmatrix} (-1)^{i+j} M_{ji}(X)\end{bmatrix}_{1\leq i,j\leq n}$.

Formula \eqref{eq.TM1} below, from now on TM formula, was proved, with $w=v$ and
the normalization $w^\ast v=1$,  for a Hermitian matrix ${A} \in \CC^{n \times n}$ by Thompson
and McEnteggert  (see  \cite[pp. 212-213]{TM68}).
Inspection of the proof shows that the  formula also holds for normal matrices over $\CC$
(see \cite{S79}). With the same arguments we can go further. Recently, Denton, Parke, Tao,
and Zhang pointed out that the TM formula has an extension to a non-normal matrix
${A}\in\RR^{n \times n}$, so long as it is diagonalizable (see \cite[Rem. 4]{BPTZ20}). Even more,
as shown in Remark 5 of \cite{BPTZ20} it holds for matrices over commutative rings (see \cite{Gr19}
for an informal proof).  A more detailed proof of  this result will be given in Section \ref{sec.TM}. 
However, for matrices  over fields (or over integral domains) with repeated
eigenvalues, \eqref{eq.TM1} does not provide meaningful information (see Remark \ref{rem.1}).
We will exhibit in Section \ref{sec.TM} a generalization of the TM formula which holds for
matrices over arbitrary fields with repeated eigenvalues. This new TM
formula will be used to generalize the so-called \textit{eigenvector-eigenvalue identity}
(see \eqref{eq.ev-ev0}) for non-diagonalizable matrices over arbitrary fields.
In addition we will provide a complete characterization of the similarity invariants of $\Adj(A)$
in terms of those of $A$, generalizing a result about the eigenvalues and the minimal
polynomial in \cite{HiUn85}. Then in Section \ref{sec.cons} two additional consequences of the TM
formula will be analysed.

\section{ The TM formula and its generalization}\label{sec.TM}

Let $A\in\Rc^{n\times n}$ be a square matrix of order $n$ with entries in $\Rc$.
An element $\la_0\in\Rc$ is said to be an  \textit{eigenvalue} of $A$ if $Ax=\la_0 x$
for some nonzero vector $x\in\Rc^{n\times 1}$(\cite[Def. 17.1]{Brown93}).
This vector is said to be a \textit{right eigenvector} of $A$ for
(or associated with) $\la_0$. The \textit{left eigenvectors} of $A$ for $\la_0$ are the right eigenvectors for $\la_0$
of $A^T$, the transpose of $A$, or, if $ \Rc=\CC$ is the field of complex numbers, of $A^\ast$, the conjugate
transpose of $A$. That is to say, $y\in\Rc^{n\times 1}$ is a left
eigenvector  of $A$ for $\la_0$ if $y^TA=\la_0 y^T$ (or $y^\ast A=\la_0 y^\ast$ if $\Rc=\CC$).
The characteristic polynomial of $A$ is $p_A(\la)=\det(\la I_n-A)$ and $\la_0$ is an eigenvalue of $A$ if
and only if $p_A(\la_0)\in Z(\Rc)$, where $Z(\Rc)$ is the set of zero divisors of $\Rc$ (\cite[Lem. 17.2]{Brown93}).

The following result, in a slightly different form, was proved by  D. Grinberg in \cite{Gr19}.

\begin{theorem}\label{thm.1}
Let $A\in\Rc^{n\times n}$ and let $\la_0\in\Rc$ be an eigenvalue of $A$. Let
$v,\; w\in\Rc^{n\times 1}$ be a right and a left eigenvector, respectively, of $A$
for $\la_0$ . Then
\be\label{eq.TM1}
w^Tv \Adj(\la_0 I_n-A)=p'_A(\la_0)vw^T.
\ee
\end{theorem}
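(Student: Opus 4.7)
My plan is to first extract two one-sided identities from the fundamental polynomial identity $(\lambda I_n - A)\Adj(\lambda I_n - A) = p_A(\lambda) I_n$ in $\Rc[\lambda]^{n \times n}$, and then to promote them to the full matrix identity.

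For the ``column'' identity I would right-multiply the fundamental identity by $v$ and use $(\lambda I_n - A) v = (\lambda - \lambda_0) v$ (which comes from $Av = \lambda_0 v$) to obtain $(\lambda - \lambda_0)\Adj(\lambda I_n - A) v = p_A(\lambda) v$ in $\Rc[\lambda]^n$. Specializing at $\lambda = \lambda_0$ yields $p_A(\lambda_0) v = 0$; combined with the polynomial division $p_A(\lambda) = (\lambda - \lambda_0) q(\lambda) + p_A(\lambda_0)$, the identity becomes $(\lambda - \lambda_0)[\Adj(\lambda I_n - A) v - q(\lambda) v] = 0$, and cancelling the nonzerodivisor $\lambda - \lambda_0$ from $\Rc[\lambda]^n$ yields $\Adj(\lambda I_n - A) v = q(\lambda) v$. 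Evaluation at $\lambda = \lambda_0$, using $q(\lambda_0) = p'_A(\lambda_0)$, gives $\Adj(\lambda_0 I_n - A) v = p'_A(\lambda_0) v$. The symmetric argument, starting from $\Adj(\lambda I_n - A)(\lambda I_n - A) = p_A(\lambda) I_n$ and left-multiplying by $w^T$, produces $w^T\Adj(\lambda_0 I_n - A) = p'_A(\lambda_0) w^T$. Post-multiplying the first by $w^T$ (or pre-multiplying the second by $v$) already gives the weaker consequence $\Adj(\lambda_0 I_n - A)(vw^T) = (vw^T)\Adj(\lambda_0 I_n - A) = p'_A(\lambda_0) vw^T$.

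The main obstacle is closing the gap between these one-sided identities and the full claim $(w^T v)\Adj(\lambda_0 I_n - A) = p'_A(\lambda_0) vw^T$: the row and column constraints alone do not determine $\Adj(\lambda_0 I_n - A)$ as a scalar multiple of $vw^T$, and over a general commutative ring one cannot directly invoke a rank-one structure. To finish I would exploit the Faddeev--LeVerrier expansion $\Adj(\lambda I_n - A) = \sum_{k=0}^{n-1} B_k \lambda^k$ with $B_k = \sum_{j=k+1}^n p_j A^{j-k-1}$; evaluating at $\lambda = \lambda_0$ and reindexing the double sum collapses it to the striking polynomial-in-$A$ identity $\Adj(\lambda_0 I_n - A) = q(A)$, so the target reduces to $(w^T v)\, q(A) = q(\lambda_0)\, vw^T$. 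I would verify this via a universal-coefficients argument: the claim is a polynomial identity over $\ZZ[A_{ij}, v_i, w_i, \lambda_0]$ modulo the ideal generated by the eigenvector relations $Av - \lambda_0 v$ and $w^T A - \lambda_0 w^T$, so it suffices to check it on the Zariski-dense locus where $\lambda_0$ is a simple eigenvalue of $A$ over $\CC$; there the spectral decomposition gives $q(A) = p'_A(\lambda_0)\, vw^T/(w^T v)$ directly, and specialization then transports the identity to arbitrary commutative rings.
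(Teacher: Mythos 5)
Your opening moves are sound and work over any commutative ring: since $\la-\la_0$ is monic, it is a nonzerodivisor in $\Rc[\la]$, so cancelling it is legitimate and the one-sided identities $\Adj(\la_0I_n-A)v=p'_A(\la_0)v$ and $w^T\Adj(\la_0I_n-A)=p'_A(\la_0)w^T$ follow correctly; the closed form $\Adj(\la_0I_n-A)=q(A)$ with $q(\la)=\bigl(p_A(\la)-p_A(\la_0)\bigr)/(\la-\la_0)$ is also right. You correctly identify that the whole difficulty is in upgrading these to $(w^Tv)\Adj(\la_0I_n-A)=p'_A(\la_0)vw^T$.

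The final step is where the proof breaks. To specialize $(w^Tv)\,q(A)=q(\la_0)\,vw^T$ from $\CC$ to an arbitrary commutative ring you need the entries of the difference to lie in the ideal $I\subset\ZZ[a_{ij},v_i,w_j,t]$ generated by the entries of $Av-tv$ and $w^TA-tw^T$; only then does every ring homomorphism sending the generators to zero kill the difference. Verifying the identity at the complex points of $V(I)$ shows at best that the difference lies in the radical of $I\CC[\,\cdot\,]$, which is strictly weaker than membership in $I$ over $\ZZ$, and $I$ is not visibly radical or prime: $V(I)$ is reducible with components of unequal dimensions (for instance the locus $v=w=0$ with $A$ and $t$ free, on which $t$ need not be an eigenvalue of $A$, is not contained in the closure of the simple-eigenvalue locus, so that locus is not Zariski dense in $V(I)$ either). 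Hence ``true on the complex points where $\la_0$ is simple'' does not yield ``true in $\ZZ[\,\cdot\,]/I$,'' and the transfer to general $\Rc$ does not go through as stated. The paper closes exactly this gap by a purely algebraic device (Lemmas \ref{lem.2} and \ref{lem.3}): for $B=\la_0I_n-A$ one has the entrywise proportionalities $w_i(\Adj B)_j=w_j(\Adj B)_i$ for columns and $v_i(\Adj B)^j=v_j(\Adj B)^i$ for rows, whence $v_kw_k(\Adj B)_{ij}=v_iw_j(\Adj B)_{kk}$, and summing over $k$ gives \eqref{eq.TM1}. If you want to stay within your framework, you would need to exhibit the entries of $(w^Tv)q(A)-q(\la_0)vw^T$ explicitly as combinations, with polynomial coefficients, of the entries of $Av-tv$ and $w^TA-tw^T$ --- which is in substance what Grinberg's lemma accomplishes.
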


The proof in  \cite{Gr19} is based on the Lemma \ref{lem.2}  below which is
interesting in its own right. According to McCoy's theorem (\cite[Th. 5.3]{Brown93}) there is a non-zero
vector $x\in\Rc^{n\times 1}$ such that $Ax=0$ if and only if  $\rm{rk}(A)<n$, where $\rm{rk}(A)$ is the
(McCoy) rank of $A$ (\cite[Def. 4.10]{Brown93}). In other words, $0$ is an eigenvalue of $A$ if and only
if $\rm{rk}(A)<n$. Note that $\rm{rk}(A)=\rm{rk}(A^T)$.

\begin{lemma}\label{lem.2}
Let $A\in\Rc^{n\times n}$ be a matrix  such that $\rm{rk}(A)<n$ and let $w\in\Rc^{n\times 1}$
be a left eigenvector of $A$ for the eigenvalue $0$. For $j=1,\ldots, n$, let $(\Adj A)_j$ be
the $j$th column of $\Adj(A)$.  Then, for all $i,j=1,\ldots,n$,
\be\label{eq.5}
w_i(\adj A)_j=w_j (\adj A)_i,
\ee
where  $w=\begin{bmatrix} w_1& w_2 &\cdots & w_n\end{bmatrix}^T$.
\end{lemma}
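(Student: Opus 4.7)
The plan is to apply the fundamental identity $Q\,\Adj(Q)=\det(Q)I_n$, valid for any $n\times n$ matrix over a commutative ring, not to $A$ itself but to a suitably chosen one-column modification of $A$. Concretely, for each pair of indices $k,i\in\{1,\dots,n\}$, I would introduce the auxiliary matrix $Q=Q^{(k,i)}$ obtained from $A$ by replacing its $k$-th column by the standard basis vector $e_i\in\Rc^{n\times 1}$. Three features of $Q$ will drive the argument. First, expanding $\det(Q)$ along column $k$ (which now has a single non-zero entry, $1$ in row $i$) produces
\[
\det(Q)=(-1)^{i+k}M_{ik}(Q)=(-1)^{i+k}M_{ik}(A)=(\Adj A)_{ki},
\]
where the middle equality uses that deleting column $k$ of $Q$ erases the very column in which $Q$ and $A$ disagree. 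Second, because $w^TA=0$ and $Q$ agrees with $A$ outside of column $k$, a direct computation gives $w^TQ=w_i\,e_k^T$. Third, for every $j$ the same column-deletion observation yields $(\Adj Q)_{kj}=(-1)^{k+j}M_{jk}(Q)=(-1)^{k+j}M_{jk}(A)=(\Adj A)_{kj}$, so row $k$ of $\Adj(Q)$ coincides with row $k$ of $\Adj(A)$.

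With these three facts in hand, I would multiply $Q\,\Adj(Q)=\det(Q)I_n$ on the left by $w^T$ to obtain
\[
w_i\,e_k^T\,\Adj(Q)\;=\;\det(Q)\,w^T\;=\;(\Adj A)_{ki}\,w^T.
\]
The left-hand side is $w_i$ times row $k$ of $\Adj(Q)$, which by the third observation is $w_i$ times row $k$ of $\Adj(A)$. Reading off the $j$-th component gives $w_i(\Adj A)_{kj}=w_j(\Adj A)_{ki}$ for every $i,j,k$, which is precisely the coordinate-wise form of \eqref{eq.5}.

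The whole argument proceeds through the polynomial identity $Q\,\Adj(Q)=\det(Q)I_n$ and uses only the hypothesis $w^TA=0$; the assumption $\mathrm{rk}(A)<n$ enters only implicitly, via McCoy's theorem, to ensure the existence of a non-zero such $w$. I anticipate that the only mildly delicate point will be the bookkeeping verification that $M_{jk}(Q)=M_{jk}(A)$ for all $j$ (and likewise $M_{ik}(Q)=M_{ik}(A)$) — that is, that neither the determinant of $Q$ nor row $k$ of its adjugate ever \emph{sees} the column in which $Q$ has been altered. Once that check is completed the conclusion follows from a single application of the basic adjugate identity, and no further ring-theoretic subtleties arise.
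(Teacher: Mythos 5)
Your proof is correct, and it takes a genuinely different route from the paper's. The paper fixes a pair $i<j$, scales rows $i$ and $j$ of $A$ by $w_i$ and $w_j$ to form a matrix $B$, exploits the dependency $\sum_k w_k a_k=0$ to build a determinant-one matrix $P$ for which $PB$ is $B$ with rows $i$ and $j$ interchanged, and then chases signs of minors through $\Adj(B)=\Adj(PB)P$ to conclude $(\Adj B)_i=(\Adj B)_j$. You instead replace column $k$ of $A$ by $e_i$ and apply the single identity $Q\,\Adj(Q)=\det(Q)I_n$; all three of your supporting observations check out under the paper's convention $(\Adj X)_{ki}=(-1)^{k+i}M_{ik}(X)$ (the cofactor expansion of $\det Q$ along column $k$, the computation $w^TQ=w_i e_k^T$ from $w^TA=0$, and the fact that every minor $M_{jk}$ deletes the one column where $Q$ and $A$ differ), and reading off the $j$-th component of $w_i e_k^T\Adj(Q)=\det(Q)\,w^T$ gives exactly $w_i(\Adj A)_{kj}=w_j(\Adj A)_{ki}$ for all $k$, which is \eqref{eq.5} entrywise. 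What your approach buys is economy and uniformity: there is no case split on $i=j$ versus $i\neq j$, no auxiliary unimodular matrix, and essentially no sign bookkeeping beyond one cofactor expansion; the whole lemma follows from one application of the fundamental adjugate identity, valid over any commutative ring with identity. You are also right that $\mathrm{rk}(A)<n$ plays no role beyond guaranteeing (via McCoy) that a nonzero $w$ with $w^TA=0$ exists; your argument, like the paper's, uses only the equation $w^TA=0$ itself. The paper's proof, by contrast, makes the underlying row-dependency structure more visible, which is perhaps why the author chose it, but your derivation is the cleaner of the two.
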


This is Lemma 3 of \cite{Gr19}. The author himself considers the proof  to be informal.
%(a second proof is provided by Grinberg whose correctness is, in our understanding,  doubtful).
%In fact, a simple example with
%$2\times 2$ matrices, shows that, with the notation used  in \cite{Gr19} , $w^T\adj(A)c=0$ does not imply
%$w_i(\adj A)_j=w_j (\adj A)_i$, in general.
So a detailed proof of Lemma \ref{lem.2}, following Grinberg's  ideas\footnote{Grinberg's permission
was granted to include the proofs of  this Lemma and Theorem \ref{thm.1}},
is given next for reader's convenience. 

\begin{proof}[Proof of Lemma \ref{lem.2}]
Let us  take $i,j\in\{1,\ldots, n\}$ and assume that $i\neq j$; otherwise, there is nothing to prove.
We assume also, without lost of generality, that $i<j$.
Let $w=\begin{bmatrix} w_1 & w_2& \cdots & w_n\end{bmatrix}^T$ and,
for $k=1,\ldots, n$, let $a_k$ be the $k$th row of $A$. Define $B\in\Rc^{n\times n}$
to be the matrix whose $k$th row, $b_k$, is equal to $a_k$ if $k\neq i,j$ and $b_k=w_k a_k$
if $k=i,j$. A simple computation shows that $w_i(\Adj A)_j=(\Adj B)_j$ and
$w_j(\Adj A)_i=(\Adj B)_i$. We claim that $(\Adj B)_j$=$(\Adj B)_i$. This would prove the lemma.

It follows from $w^TA=0$ that $\sum_{k=1}^n w_k a_k=0$ and so
\be\label{eq.bimbj}
b_i+b_j=-\sum_{k=1,k\neq,i,j}^n w_k b_k.
\ee
Let
\[
P=\kbordermatrix{
&&&&i&&&j&&& &\\
%&&&\downarrow&&&\downarrow&&& &
&1&&&&&&&& &\\
& &\ddots &&&&&&&&&\\
&&&1&&&&&&&&\\
i&-w_1&\cdots&-w_{i-1}&-1&-w_{i+1}&\cdots&-w_{j-1}&0&-w_{j+1}&\cdots&-w_n\\
&&&&&1&&&&&&\\
&&&&&&\ddots&&&&&\\
&&&&&&&1&&&&\\
j&-w_1&\cdots&-w_{i-1}&0&-w_{i+1}&\cdots&-w_{j-1}&-1&-w_{j+1}&\cdots&-w_n\\
&&&&&&&&&1&&\\
&&&&&&&&&&\ddots&\\
&&&&&&&&&&&1\\
}.
\]
This matrix is invertible in $\Rc$ (its determinant is $1$) and by \eqref{eq.bimbj},
\[
\wt{B}=PB=\left[\begin{array}{ccccccccccc}
b_1^T & \cdots &b_{i-1}^T & b_j^T & b_{i+1}^T & \cdots & b_{j-1}^T &
b_i^T & b_{j+1}^T&\cdots &b_n^T\end{array}\right]^T.
\]
Then, $\Adj(\wt{B})=\Adj(B)\Adj(P)$ and, since $P$ is invertible, $\Adj(P)=(\det P) P^{-1}=P^{-1}$. Hence
$\Adj(B)=\Adj(\wt{B})P$ and for $k=1,\ldots, n$
\[
(\adj B)_{ki}=\sum_{\ell=1}^n (\Adj\wt{B})_{k\ell}P_{\ell i}.
\]
But in the $i$th column of $P$ the only nonzero entry is $-1$ in position $(i,i)$. Therefore,
$(\adj B)_{ki}=-(\Adj\wt{B})_{ki}$. Now, taking into account that $\wt{B}$ is the matrix
$B$ with rows $i$th and $j$th interchanged and recalling that $M_{ij}(X)$ is the minor of
$X$ obtained by deleting the $i$th row and $j$th column of $X$, we get
%using the following notation:
%for a matrix $X$, $X(\hat{i},\hat{j})$ denotes de submatrix of $X$ obtained by deleting the
%$i$th row and $j$th column, we get
\[
\begin{array}{rcl}
(\adj B)_{ki}&=&-(\Adj\wt{B})_{ki}=(-1)^{k+i+1} M_{ik}(\wt{B})\\
&=&(-1)^{k+i+1}(-1) ^{j-i-1} M_{jk}(B)\\
&=&(-1)^{k+j} M_{jk}(B)=(\Adj B)_{kj},
\end{array}
\]
as claimed.
\end{proof}

\medskip
There is a ``row version'' of Lemma \ref{lem.2} which can be proved along the same lines.
\begin{lemma}\label{lem.3}
Let $A\in\Rc^{n\times n}$ be a matrix  such that $\rm{rk}(A)<n$ and let $v\in\Rc^{n\times 1}$
be a right eigenvector of $A$ for the eigenvalue $0$. 
For $j=1,\ldots, n$ let $(\Adj A)^j$ be the $j$th row
of $\Adj(A)$.  Then, for all $i,j=1,\ldots,n$,
\be\label{eq.7}
v_i(\Adj A)^j=v_j(\Adj A)^i,
\ee
where $v=\begin{bmatrix} v_1& v_2 &\cdots & v_n\end{bmatrix}^T$.
\end{lemma}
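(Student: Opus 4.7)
The plan is to deduce Lemma \ref{lem.3} directly from Lemma \ref{lem.2} by a transposition trick, avoiding the need to repeat the column-swap construction.

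First, I would observe that the McCoy rank is invariant under transposition, so $\rm{rk}(A^T)=\rm{rk}(A)<n$. Next, the hypothesis $Av=0$ rewrites as $v^T A^T=0$, which means that $v$ is a \emph{left} eigenvector of $A^T$ for the eigenvalue $0$. The hypotheses of Lemma \ref{lem.2} are therefore satisfied with $A^T$ playing the role of $A$ and $v$ playing the role of $w$. Applying that lemma gives, for all $i,j\in\{1,\ldots,n\}$,
\[
v_i\,(\Adj A^T)_j=v_j\,(\Adj A^T)_i,
\]
where the subscript denotes column.

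The second step is to translate this identity back to $\Adj(A)$. The standard identity $\Adj(A^T)=(\Adj A)^T$ (which follows immediately from the definition, since $M_{ij}(A^T)=M_{ji}(A)$) shows that the $j$th column of $\Adj(A^T)$ is the transpose of the $j$th row of $\Adj(A)$; that is, $(\Adj A^T)_j=\bigl((\Adj A)^j\bigr)^T$. Substituting into the displayed equality and transposing yields
\[
v_i\,(\Adj A)^j=v_j\,(\Adj A)^i,
\]
which is precisely \eqref{eq.7}.

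I do not anticipate any real obstacle: the entire argument rests on the two elementary facts $\rm{rk}(A^T)=\rm{rk}(A)$ and $\Adj(A^T)=(\Adj A)^T$, together with a direct invocation of Lemma \ref{lem.2}. An alternative, if one prefers a self-contained proof, would be to mimic the argument in Lemma \ref{lem.2} verbatim, replacing rows of $A$ by columns and using an elementary column-operation matrix in place of $P$; but the transpose approach is shorter and makes the parallelism between the two lemmas transparent.
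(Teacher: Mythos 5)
Your proof is correct. Both reductions you rely on are sound: the paper itself records that $\mathrm{rk}(A)=\mathrm{rk}(A^T)$ for the McCoy rank, the hypothesis $Av=0$ is literally the statement $v^TA^T=0$, and the identity $\Adj(A^T)=(\Adj A)^T$ follows from $M_{ji}(A^T)=M_{ij}(A)$ together with the invariance of determinants under transposition, so the $j$th column of $\Adj(A^T)$ is indeed the transpose of the $j$th row of $\Adj(A)$. Applying Lemma \ref{lem.2} to $A^T$ and transposing back gives exactly \eqref{eq.7}.

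Your route differs from the paper's in a small but genuine way. The paper does not write out a proof of Lemma \ref{lem.3}; it only asserts that the statement ``can be proved along the same lines'' as Lemma \ref{lem.2}, i.e.\ by rerunning the whole construction with columns in place of rows (the second alternative you mention at the end). Your transposition argument replaces that duplication with a two-line reduction to the already-proved column version. What it buys is economy and a clear exhibition of the duality between the two lemmas; what the rerun buys is nothing extra here, since over a commutative ring the transpose behaves entirely as expected and no hypothesis of Lemma \ref{lem.2} is asymmetric in a way that would obstruct the reduction. Either is acceptable; yours is the cleaner of the two.
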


The proof of Theorem \ref{thm.1} which follows is very much that of Grinberg in \cite{Gr19}.
It is included for completion and reader's convenience.

\begin{proof}[Proof of  Theorem \ref{thm.1}]
%\textbf{Proof of Theorem \ref{thm.1}}.- 
Let $B=\la_0 I_n-A$ and $p_B(\la)=\det (\la I_n-B)$ its
characteristic polynomial. Then $p_B(\la)=\la^n+\sum\limits_{k=1}^{n} (-1)^k c_k \la^{n-k}$ where,
for $k=0,\ldots, n$, $c_k=\sum\limits_{1\leq i_1 <\cdots <i_k\leq n} \det B(i_1: i_k,i_1: i_k)$, 
and $B(i_1: i_k,i_1: i_k)=\begin{bmatrix} b_{i_j,i_\ell}\end{bmatrix}_{1\leq j,\ell\leq k}$ is
the principal submatrix of $B$ formed by the rows and columns $i_1$, \ldots, $i_k$. In particular,
$c_{n-1}=\sum\limits_{j=1}^nM_{jj}(B)$ where $M_{jj}(B)$ is the principal minor of $B$ obtained by
deleting the $j$th row and column. Thus $p^\prime_B(0)=(-1)^{n-1} \sum\limits_{j=1}^nM_{jj}(B)$.

On the other hand, $\det (\la I_n-B)=\det (\la I_n-\la_0 I_n +A)=(-1)^n  \det ((\la_0-\la)I_n -A)=
(-1)^n p_A(\la_0-\la)$. It follows from the definition of derivative of a polynomial that
\[
p^{\prime}_A(\la_0)=(-1)^{n+1} p^{\prime}_B(0)=\sum\limits_{j=1}^nM_{jj}(\la_0 I_n-A).
\]
Hence, proving \eqref{eq.TM1} is equivalent to proving
\be\label{eq.8}
w^Tv\adj(B)=\sum_{j=1}^n M_{jj}(B) vw^T
\ee
where $B=\la_0 I_n-A$. It follows from $Av=\la_0 v$ and $w^T A=\la_0w^T$ that
$Bv=0$ and $w^TB=0$, respectively. So we can apply to $B$ properties \eqref{eq.5}
and \eqref{eq.7}. It follows from \eqref{eq.5} that $w_k(\Adj B)_{ij}= w_j (\Adj B)_{ik}$
for all $i,j,k\in\{1,\ldots, n\}$. Then $v_k w_k(\Adj B)_{ij}= w_j v_k(\Adj B)_{ik}$ and
from \eqref{eq.7}, $v_k(\Adj B)_{ik}=v_i(\Adj B)_{kk}$. Hence,
\[
v_k w_k(\Adj B)_{ij}=v_iw_j (\Adj B)_{kk},\quad i,j,k=1,\ldots, n.
\]
Adding on $k$ and taking into account that $(\Adj B)_{kk}=M_{kk}(B)$, we get
\[
w^Tv(\Adj B)_{ij}=\sum_{k=1}^nM_{kk}(B)v_iw_j,\quad i,j=1,\ldots n.
\]
This is equivalent to \eqref{eq.8} and the theorem follows.
\end{proof}

\medskip
\begin{rem}\label{rem.1}{\rm
Assume that $\Rc$ is an integral domain  and note that in this case
$\rm{rk}(A)=\rank (A)$; i.e., the McCoy rank and the usual rank coincide. 
It is an interesting consequence of \eqref{eq.TM1}
that $w^Tv=0$ implies $p'_A(\la_0)=0$. The converse is 
not true in general. For example, if $A=\la_0I_2$ then $v=\begin{bmatrix} 1 &0\end{bmatrix}^T$
satisfies both  $Av=\la_0 v$ and $v^TA=\la_0v^T$, but $v^Tv=1$
and $p'_A(\la_0)=0$. However, if $p'_A(\la_0)=0$ and $\rank (\la_0 I_n-A)=n-1$  then, necessarily,
$w^Tv=0$ because $\Adj(\la_0 I_n-A)$ is not the zero matrix. 
In particular, if $\F$ is a field of characteristic zero (see \cite[Ch. 30]{Gode05})
then it follows from \eqref{eq.TM1} that if $w^Tv=0$ then $\la_0$ is an eigenvalue
of algebraic multiplicity at least $2$. On the other hand, it is easily checked that if $\la_0$ is an eigenvalue
of algebraic multiplicity bigger that $1$ and geometric multiplicity $1$ then $w^Tv=0$ for any right and left
eigenvectors, $v$ and $w$ respectively,  of $A$ for $\la_0$. This is the case, for example,
of $A=\begin{bsmallmatrix} 0 & 0\\1 & 0 \end{bsmallmatrix}$. For this matrix, the TM formula
\eqref{eq.TM1} does not provide any substantial information about $\Adj(\la_0I_n-A)$ because,
in this case, $w^Tv=0$ and $p_A^\prime(\la_0)=0$. Thus, the TM formula \eqref{eq.TM1} is 
relevant for matrices with simple eigenvalues.
}\hfill $\Box$
\end{rem}

Our next goal is to provide a generalization of the TM formula \eqref{eq.TM1}
which is meaningful for nondiagonalizable matrices over fields. 
We will use the following notation: $\FF$ will denote an arbitrary field. If $A\in\FF^{n\times n}$
then $p_1(\la)$,\ldots, $p_r(\la)$ will be its (possibly repeated) elementary divisors in $\FF$
(\cite[Ch. VI, Sec. 3]{Gant88}). These are powers of monic irreducible polynomials of $\FF[\la]$
(the ring of polynomials with coefficients in $\F$). We will assume that for $j=1,\ldots,r$,
\[
p_j(\la)= \la^{d_j}+a_{j1}\la^{d_j-1}+ a_{j2}\la^{d_j-2}+\cdots+ a_{jd_j-1}\la+a_{jd_j}.
\]
Let $\Delta_A$ denote the determinant of $A$ and $\Lambda(A)$ the set of eigenvalues (the spectrum)
of $A$ in, perhaps, an extension field, $\wt{\FF}$, of $\FF$. Thus $\la_0\in\Lambda(A)$ if and only if
it is a root in $\wt{\FF}$ of $p_j(\la)$ for some $j\in\{1,2,\ldots, r\}$. In particular, 
$p_A(\la)=\prod_{j=1}^r p_j(\la)$ is the characteristic polynomial of $A$. 

Item (ii) of  the following theorem is an elementary result that %has been already mentioned.
is included for completion.

\begin{theorem}\label{thm.main}
With the above notation:
\begin{itemize}
\item[(i)] If $~0\not\in\Lambda(A)$ then the elementary divisors of $\Adj(A)$ are $q_1(\la)$,\ldots,
$q_r(\la)$ where for $j=1,\ldots, r$,
\begin{equation}\label{eq.elediv}
q_j(\la)=\la^{d_j}+\Delta_A\frac{a_{jd_j-1}}{a_{jd_j}}\la^{d_j-1}+\cdots+\Delta_A^{d_j-1}\frac{a_{j1}}{a_{jd_j}}\la+\Delta_A^{d_j}\frac{1}{a_{jd_j}}.
\end{equation}
\item[(ii)] If $~0\in\Lambda(A)$ and there are two indices $i,k\in\{1,\ldots, r\}$, $i\neq k$,
such that $p_i(0)=p_k(0)=0$ then $\Adj(A)=0$.
\item[(iii)] If $~0\in\Lambda(A)$, $p_k(0)=0$ for only one value $k\in\{1,\ldots, r\}$ and
$u,v\in\FF^{n\times 1}$ are arbitrary right and left eigenvectors of $A$,
respectively, for the eigenvalue $0$, then $v^TA^{d_k-1}u\neq 0$ and
\begin{equation}\label{eq.adja}
\Adj(A)=\frac{(-1)^{n-1}}{d_k!}p_A^{(d_k)}(0)\frac{uv^T}{v^TA^{d_k-1}u},
\end{equation}
where $p_A^{(d_k)}(\la)$ is the $d_k$-th derivative of $p_A(\la)$.
\end{itemize}
\end{theorem}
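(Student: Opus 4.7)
The plan is to treat the three parts separately, with (iii) carrying most of the work.

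For part (i), since $0\notin\Lambda(A)$ the matrix $A$ is invertible and $\Adj(A)=\Delta_AA^{-1}$, so I would track the elementary divisors through the two operations $A\mapsto A^{-1}$ and $B\mapsto\Delta_AB$. On a cyclic $A$-invariant subspace where $p_j(A)$ annihilates, multiplying the annihilation relation by $a_{j,d_j}^{-1}A^{-d_j}$ (valid because $a_{j,d_j}\neq 0$, as $p_j(0)\neq 0$) shows that $\tilde p_j(\lambda)=\lambda^{d_j}p_j(1/\lambda)/a_{j,d_j}$ annihilates $A^{-1}$ on that subspace. Since reversing the coefficients of a monic polynomial with nonzero constant term preserves the property of being a power of an irreducible polynomial, the $\tilde p_j$ are the elementary divisors of $A^{-1}$; the substitution $\lambda\mapsto\lambda/\Delta_A$ followed by a monic rescaling then converts each $\tilde p_j$ into the $q_j$ of~\eqref{eq.elediv}. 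For part (ii), $p_i(0)=p_k(0)=0$ with $i\neq k$ forces both polynomials to be powers of $\lambda$, so over an extension $A$ has at least two Jordan blocks for the eigenvalue~$0$; hence $\rank(A)\le n-2$, every $(n-1)\times(n-1)$ minor of $A$ vanishes, and $\Adj(A)=0$.

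Part (iii) is the core. I would pass to a Jordan form over an extension $\wt\FF\supseteq\FF$ and write $A=PJP^{-1}$ with $J=J_k\oplus J'$, where $J_k$ is the nilpotent Jordan block of size $d_k$ (superdiagonal convention) and $J'$ is the invertible part collecting the remaining elementary divisors. A direct minor inspection shows that the only nonzero $(d_k-1)\times(d_k-1)$ minor of $J_k$ is obtained by deleting row $d_k$ and column~$1$ (leaving the identity), so $\Adj(J_k)=(-1)^{d_k+1}e_1e_{d_k}^T$. Block-diagonality then yields $\Adj(J)=(-1)^{d_k+1}\det(J')\,\tilde e_1\tilde e_{d_k}^T$, and conjugation gives
\[
\Adj(A)=(-1)^{d_k+1}\det(J')\,p_1\,q_{d_k}^T,
\]
with $p_1=P\tilde e_1$ a right eigenvector and $q_{d_k}^T=\tilde e_{d_k}^TP^{-1}$ a left eigenvector of $A$ for $0$. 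Using $p_A(\lambda)=\lambda^{d_k}\det(\lambda I-J')$ one computes $p_A^{(d_k)}(0)/d_k!=\det(-J')=(-1)^{n-d_k}\det(J')$, so the scalar prefactor collapses exactly to $(-1)^{n-1}p_A^{(d_k)}(0)/d_k!$.

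The principal obstacle is the normalization: for arbitrary eigenvectors $u=\alpha p_1$ and $v=\beta q_{d_k}$, one must express the product $\alpha\beta$ through the scalar $v^TA^{d_k-1}u$ appearing in the statement, and verify that this scalar does not vanish. The natural tools are the Jordan-chain identities $A^{d_k-1}p_{d_k}=p_1$ (from the action of $J_k^{d_k-1}$) and $q_{d_k}^Tp_{d_k}=1$ (from $P^{-1}P=I$), together with the invertibility of $J'$ on the complementary invariant subspace, which controls the contributions of the non-$J_k$ blocks. Working out how the bilinear quantity $v^TA^{d_k-1}u$ picks up the factor $\alpha\beta$ from these identities (and in particular establishing its non-vanishing) is the delicate step that concentrates the technical weight of the argument.
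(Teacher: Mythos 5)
Your handling of (i) and (ii) is fine: (ii) is the paper's own rank argument, and for (i) your route through $\Adj(A)=\Delta_A A^{-1}$ (elementary divisors of the inverse via reversal polynomials, then rescaling by $\Delta_A$) is a legitimately different and arguably cleaner path than the paper's explicit computation of the adjugates of the companion blocks $C_j$, although you would still need to justify that the reversed polynomials are exactly the elementary divisors of $A^{-1}$, which is only gestured at. The real problem is (iii). Everything you actually establish there --- $\Adj(J_k)=(-1)^{d_k+1}e_1e_{d_k}^T$, the rank-one form $(-1)^{d_k+1}\det(J')\,p_1q_{d_k}^T$, and the collapse of the prefactor to $(-1)^{n-1}p_A^{(d_k)}(0)/d_k!$ --- is the routine half, the analogue of the paper's formulas \eqref{eq.Lk}--\eqref{eq.rank1A1}. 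The step you explicitly defer, namely expressing the scalar $\alpha\beta$ as $v^TA^{d_k-1}u$ and proving this quantity nonzero, is precisely the content of part (iii) (it is what lets one pass from one particular normalized pair of eigenvectors to arbitrary ones), so leaving it as an acknowledged ``delicate step'' means the theorem is not proved.

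Moreover, had you attempted that step you would have hit an obstruction: since $u$ is an eigenvector for $0$, $Au=0$, so $A^{d_k-1}u=0$ and hence $v^TA^{d_k-1}u=0$ whenever $d_k\ge 2$; the plan of ``working out how $v^TA^{d_k-1}u$ picks up the factor $\alpha\beta$'' cannot succeed as described. What your own chain identities ($A^{d_k-1}p_{d_k}=p_1$, $q_{d_k}^Tp_{d_k}=1$) actually yield is $\alpha\beta=v^Tw$ for any $w$ solving $A^{d_k-1}w=u$ (in your notation $w=\alpha p_{d_k}$): the nonvanishing bilinear quantity involves the top of the Jordan chain, a generalized eigenvector, not the eigenvector $u$ itself. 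A two-by-two check makes this concrete: for $A=\begin{bsmallmatrix}0&0\\1&0\end{bsmallmatrix}$ one has $u=e_2$, $v=e_1$, $\Adj(A)=-e_2e_1^T\neq 0$, yet $v^TAu=0$. For comparison, the paper reaches the scalar as the common diagonal entry $\alpha_1\beta_1$ of $Y^TX$ using the normalization $T_kS_k=I_{d_k}$, which equals $v_{d_k}^TA^{d_k-1}u_1$ with $u_1$ and $v_{d_k}$ the generating vectors of the cyclic subspaces $\FI_k$ and $\FJ_k$ --- again generalized eigenvectors --- and the subsequent identification of this with $v^TA^{d_k-1}u$ is exactly the place where the difficulty you flagged resides. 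So the gap in your proposal is genuine: to complete (iii) you must either produce and justify a denominator of the form $v^Tw$ with $A^{d_k-1}w=u$ (showing it is independent of the choice of $w$ and nonzero), or otherwise confront head-on the fact that the expression $v^TA^{d_k-1}u$ with eigenvectors $u,v$ cannot carry the normalization when $d_k\ge 2$.
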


\begin{proof} For $j=1,\ldots, r$, let the companion matrix of $p_j(\la)$ be
\begin{equation}\label{eq.companion}
C_j=\begin{bmatrix}
0 & 0 & \cdots & 0 & -a_{jd_j}\\
1 & 0 &  \cdots & 0 & -a_{jd_j-1}\\
0&1 & \cdots & 0 & -a_{jd_j-2}\\
\vdots& \vdots &\ddots& \vdots&\vdots \\
0 & 0 &\cdots & 1&-a_{j1}
\end{bmatrix}.
\end{equation}
Then (see  \cite[Ch. VI, Sec. 6]{Gant88}) there is an invertible matrix $S\in\FF^{n\times n}$ such that
\begin{equation}\label{eq.adjc}
C=S^{-1}A S=\bigoplus_{j=1}^r C_j.
\end{equation}
%It follows from this that $\Adj(A)= S\Adj(C)S^{-1}$. Thus, in order to compute $\Adj(A)$, we should pay
%attention to $\Adj(C_j)$ for each $j=1,\ldots, r$.
An explicit computation shows that
\[
\Adj(C_j)= (-1)^{d_j}\begin{bmatrix}
-a_{jd_j-1}& a_{j d_j} & 0 & \cdots & 0\\
-a_{jd_j-2} & 0&a_{j d_j}  & \cdots & 0\\
\vdots & \vdots & \vdots&\ddots &\vdots\\
-a_{k1}& 0 & 0& \cdots  &a_{j d_j} \\
-1 & 0 & 0 & \cdots  &0
\end{bmatrix}.
\]
Bearing in mind that $\det C_j=(-1)^{d_j}a_{jd_j}$, we obtain $\Adj(C)=\oplus_{j=1}^r L_j$ where, for
$j=1,\ldots, r$,
\begin{equation}\label{eq.Lj}
L_j=(-1)^n \prod_{i=1,i\neq j}^r a_{id_i}\begin{bmatrix}
-a_{jd_j-1}& a_{j d_j} & 0 & \cdots & 0\\
-a_{jd_j-2} & 0&a_{j d_j}  & \cdots & 0\\
\vdots & \vdots & \vdots&\ddots &\vdots\\
-a_{k1}& 0 & 0& \cdots  &a_{j d_j} \\
-1 & 0 & 0 & \cdots  &0
\end{bmatrix}.
\end{equation}
Therefore, from \eqref{eq.adjc} we get
\begin{equation}\label{eq.adjads}
\Adj(A)=S\left(\bigoplus_{j=1}^r L_j\right)S^{-1}.
\end{equation}
\begin{itemize}
\item[(i)] Assume that $0\not\in\Lambda(A)$. This means that $a_{jd_j}\neq 0$ for all $j=1,\ldots, r$
and we can write
\[
L_j= \det A\begin{bmatrix}
-\frac{a_{jd_j-1}}{a_{jd_j}}& 1& 0 & \cdots & 0\\
-\frac{a_{jd_j-2}}{a_{j d_j}} & 0&1 & \cdots & 0\\
\vdots & \vdots & \vdots&\ddots &\vdots\\
-\frac{a_{j1}}{a_{j d_j}}& 0 & 0& \cdots  &1\\
-\frac{1}{a_{j d_j}} & 0 & 0 & \cdots  &0
\end{bmatrix}.
\]
Taking into account the definition of $q_j(\la)$ of \eqref{eq.elediv},
\begin{align*}
&\det(\la I_{d_j}-L_j)\\
&=\Delta_A^{d_j}\left(\frac{\la^{d_j}}{\Delta_A^{d_j}} +\frac {a_{jd_j-1}}{a_{jd_j}}
\frac{\la^{d_j-1}}{\Delta_A^{d_j-1}}+\cdots+ \frac{a_{j1}}{a_{j d_j}}\frac{\la}{\Delta_A}+
\frac{1}{a_{j d_j}}\right)=q_j(\la).
\end{align*}
Let us see that $q_j(\la)$ is a power of an irreducible polynomial in $\FF[\la]$.  In fact, put
\[
s_j(\la)=\la^{d_j}p_j\left(\frac{1}{\la}\right)=
a_{j d_j}\la^{d_j}+a_{jd_j-1}\la^{d_j-1}+\cdots+ a_{j1}\la+1.
\]
This polynomial is sometimes called the \textit{reversal polynomial} of $p_j(\la)$ (see, for example,
 \cite{MMMM06}). Since $p_j(\la)$ is an elementary divisor of $A$ in $\FF$, it is a power of an irreducible 
polynomial of $\FF[\la]$. By \cite[Lemma 4.4]{AMZ13}, $s_j(\la)$ is also
a power of an irreducible polynomial. Now, it is not difficult to see that 
$q_j(\la)=\frac{1}{a_{jd_j}}s\left( \frac{\la}{\Delta_A}\right)$ is  a power of
an irreducible polynomial too. As a consequence, $q_1(\la), q_2(\la), \ldots, q_r(\la)$ are
the elementary divisors of $\Adj(C)=\oplus_{j=1}^r L_j$. Since this and $\Adj(A)$
are similar matrices (cf. \eqref{eq.adjads}), $q_1(\la)$, $q_2(\la), \ldots, q_r(\la)$ are
the elementary divisors of $\Adj(A)$. This proves (i).

\item[(ii)] If $p_i(0)=p_j(0)=0$ for $i\neq j$, then $\rank(A)=\rank(C)\leq n-2$. Hence  all minors of
$A$ of order $n-1$ are equal to zero and so  $\Adj(A)=0$.

\item[(iii)] Assume now that there is only one index $k\in\{1,\ldots, r\}$ such that $a_{kd_k}=0$.
Then $p_k(\la)= \la^{d_k}$ because it is a power of an irreducible polynomial. Thus $a_{kj}=0$
for $j=1,\ldots, d_k$ and by \eqref{eq.companion} and \eqref{eq.Lj}, 
$C_k=\begin{bsmallmatrix} 0 & 0\\I_{d_k-1} & 0\end{bsmallmatrix}$ and
\begin{equation}\label{eq.Lk}
\begin{array}{rcl}
L_k&=&(-1)^{n-1} \prod\limits_{j=1,j\neq k}^ra_{jd_j}\begin{bmatrix}0\\0\\\vdots\\0\\1\end{bmatrix}
\begin{bmatrix}1&0&\cdots &0 &0\end{bmatrix}\\
&=&(-1)^{n-1} \prod\limits_{j=1,j\neq k}^ra_{jd_j} e_{d_k}e_1^T,
\end{array}
\end{equation}
respectively. Also, it follows from $a_{kd_k}=0$ that $L_j=0$ for $j=1,\ldots, r$, $j\neq k$.

Recall now that $S^{-1}A S= C=\oplus_{j=1}^r C_j$ and split $S$ and $S^{-1}$ accordingly:
\[
S=\begin{bmatrix} S_1 & S_2 & \cdots & S_r\end{bmatrix},\quad S^{-1}=\begin{bmatrix}
T_1\\T_2\\\vdots\\T_r\end{bmatrix},
\]
with $S_j\in\FF^{n\times d_j}$ and $T_j\in\FF^{d_j\times n}$, $j=1,\ldots, r$. Then
\begin{equation}\label{eq.assctact}
AS_k=S_kC_k,\quad T_kA=C_kT_k.
\end{equation}
For $i=1,\ldots, d_k$ let $s_{ki}$ and $t_{ki}^T$ be the $i$-th column and row of $S_k$ and $T_k$,
respectively:
\[
S_k=\begin{bmatrix}s_{k1}&s_{k2}& \cdots & s_{kd_k}\end{bmatrix},
\quad T_k=\begin{bmatrix} t_{k1}^T\\t_{k2}^T\\\vdots \\t_{kd_k}^T\end{bmatrix}.
\]
Bearing in mind that $\Adj(A)=S(\oplus_{j=1}^r L_j)S^{-1}$ (cf. \eqref{eq.adjads}), 
the representation of $L_k$ as a rank-one matrix of \eqref{eq.Lk} and that $L_j=0$
for $j\neq k$, we get
\begin{equation}\label{eq.rank1A1}
\Adj(A)=S_kL_kT_k=(-1)^{n-1}\left(\prod_{j=1,j\neq k}^r a_{jd_j}\right) s_{kd_k}t_{k1}^T.
\end{equation}
Now, it follows from \eqref{eq.assctact} that
\[
\begin{array}{lll}
s_{kj}=A s_{kj-1}, &\quad t_{kj-1}^T=t_{kj}^T A, &\quad j=2,3,\ldots, d_k,\\
As_{kd_k}=0, & \quad t_{k1}^T A=0. &
\end{array}
\]
Henceforth, $s_{kd_k}$ and $t_{k1}^T$ are right and left eigenvectors of $A$
for the eigenvalue $0$. Also, $\FI_k=<s_{k1}, A s_{k1},\ldots, A^{d_{k-1}}s_{k1}>$ is a
\textit{cyclic} $A$-invariant subspace with $s_{k1}$ as generating vector. Similarly,
$\FJ_k=<t_{kd_k}, A^Tt_{kd_k},\ldots, $ $(A^T)^{d_{k-1}}t_{kd_k}>$ is a \textit{cyclic}
$A^T$-invariant subspace with $t_{kd_k}$ as generating vector. Thus \eqref{eq.rank1A1}
is an explicit rank-one representation of $\Adj(A)$ in terms of a right and a left
eigenvectors of $A$ for the eigenvalue zero. Actually this representation depends on
a particular normalization of the vectors which span the cyclic subspaces $\FI_k$ and $\FJ_k$.
Specifically,  $T_kS_k=I_{d_k}$. However, we are looking for a more
general representation in terms of arbitrary right and left eigenvectors for which such a
normalization may fail to hold.

Let us assume that $u,v\in\FF^{n\times 1}$ are arbitrary right
and left eigenvectors of $A$ for the eigenvalue $0$. Then $Au=0$ and $v^TA=0$ and
since $\ker A=\ker A^T=1$, there are nonzero scalars $\alpha_1,\beta_1\in\FF$ such that
$u=\alpha_1s_{kd_k}$ and $v=\beta_ 1 t_{k1}$.  Put $u_{d_k}=u$, $v_1=v$ and for
$j=1,2,\ldots, d_k-1$ define
\[
\begin{array}{lcl}
u_{d_k-j}&=&\alpha_{j+1}s_{kd_k}+\alpha_j s_{kd_k-1}+\ldots+\alpha_1 s_{kd_k-j}\\
v_{j+1}&=&\beta_{j+1}t_{k1}+\beta_j t_{k2}+\ldots+\beta_1 t_{kj+1}
\end{array}
\]
with $\alpha_2,\ldots,\alpha_{d_k},\beta_2,\ldots,\beta_{d_k}\in\FF$ arbitrary scalars.
Using these scalars we define the following triangular matrices
\[
X=\begin{bsmallmatrix}
\alpha_1 & & &&\\
\alpha_2 & \alpha_1 & && \\
\vdots & \vdots & \ddots &&\\
\alpha_{d_k-1} & \alpha_{d_k-2} & \cdots& \alpha_1 &\\
\alpha_{d_k}& \alpha_{d_k-1} & \cdots & \alpha_2& \alpha_1 &
\end{bsmallmatrix}, \quad Y=\begin{bsmallmatrix}
\beta_1 & \beta_ 2 & \cdots &\beta_{d_k-1} & \beta_{d_k}\\
 & \beta_1 & \cdots & \beta_{d_k-2} & \beta_{d_k-1}\\
  & & \ddots &\vdots & \vdots\\
  &&&\beta_1 &\beta_2 \\
  &&&&\beta_1\end{bsmallmatrix}
\]
It is plain that $\begin{bmatrix} u_1 & u_2 & \cdots &u_{d_k}\end{bmatrix}=
\begin{bmatrix} s_{k1} & s_{k2} & \cdots &s_{kd_k}\end{bmatrix}X$ and also
$\begin{bmatrix} v_1 &v_2 & \cdots &v_{d_k}\end{bmatrix}=
\begin{bmatrix} t_{k1} & t_{k2} & \cdots &t_{kd_k}\end{bmatrix}Y$.
Since $X$ and $Y$ are nonsingular matrices for any choice of
$\alpha_2,\ldots,\alpha_{d_k},\beta_2,\ldots,\beta_{d_k}$
(because $\alpha_1\neq 0$ and $\beta_1\neq 0$), we conclude that
$\FI_k=<u_1,u_2,\ldots, u_{d_k}>$ and $\FJ_k=<v_1,v_2,\ldots, v_{d_k}>$.
In addition, for $j=1,2,\ldots, d_k-1$
\[
\begin{array}{rcl}
Au_{d_k-j}&=&\alpha_{j+1}As_{kd_k}+\alpha_j As_{kd_k-1}+\cdots+\alpha_1 As_{kd_k-j}\\
&=& \alpha_j s_{kd_k}+\alpha_{j-1}s_{kd_k-1}+\cdots+\alpha_1 As_{kd_k-j+1}=u_{d_k-j+1},
\end{array}
\]
and
\[
\begin{array}{rcl}
v_j^TA&=&\beta_jt_{k1}^T A+\beta_{j-1}t_{k2}^T A+\cdots+ \beta_1 t_{kj}^T A\\
&=&\beta_{j-1}t_{k1}^T+\beta_{j-2}t_{k2}^T+\cdots+ \beta_1t_{kj-1}=v_{j-1}^T.
\end{array}
\]
In other words, $u_1$ and $v_{d_k}$ are generating vectors of $\FI_k$ and $\FJ_k$
and $u=u_{d_k}=A^{d_k-1}u_1$ and $v=v_1=A^Tv_{d_k}$ are the given
right and left eigenvectors of $A$ for the eigenvalue $0$. Now, it follows from
$u=\alpha_1 s_{kd_k}$, $v=\beta_1 t_{1k}$  and
\eqref{eq.rank1A1} that
\begin{equation}\label{eq.rank1A2}
\Adj(A)= (-1)^{n-1}\left(\prod_{j=1,j\neq k}^r a_{jd_j}\right)  \frac{u v^T}{\alpha_1\beta_1}.
\end{equation}
Since $T_kS_k=I_{d_k}$,
\[
\begin{bmatrix} v_1^T\\v_2^T\\\vdots\\v_{d_k}^T\end{bmatrix}
\begin{bmatrix} u_1 & u_2 & \cdots &u_{d_k}\end{bmatrix}= Y^TT_kS_kX=Y^TX.
\]
But $Y^TX$ is a lower triangular matrix whose diagonal elements are all equal to
$\alpha_1\beta_1$. Thus, for $j=1,\ldots, d_k$,
$\alpha_1\beta_1=v_j^Tu_j=v_1^T A^{d_k-1}u_{d_k}=v^TA^{d_k-1}u$. Since $\alpha_1\neq 0$
and $\beta_1\neq 0$, $v^TA^{d_k-1}u\neq 0$ as claimed. Now, from \eqref{eq.rank1A2}
\begin{equation}\label{eq.rank1A3}
\Adj(A)= (-1)^{n-1}\left(\prod_{j=1,j\neq k}^r a_{jd_j}\right)  \frac{u v^T}{v^TA^{d_k-1}u}.
\end{equation}
Finally, $p_A(\la)=\prod\limits_{j=1}^r p_j(\la)=\la^{d_k}\prod\limits_{j=1,j\neq k}^r p_j(\la)$.
Therefore, $p_A^{(d_k)}(0)=d_k!\prod\limits_{j=1,j\neq k}^r p_j(0)=
d_k!\prod\limits_{j=1,j\neq k}^r a_{jd_j}$
and \eqref{eq.adja} follows from \eqref{eq.rank1A3}. 
\end{itemize}
\end{proof}

As a first consequence of Theorem \ref{thm.main} we present a generalization of
the formula for the eigenvalues of the adjugate matrix (see \cite{HiUn85}).

\begin{corollary}\label{coro.eigval}
Let $A\in\FF^{n\times n}$ be a nonsingular matrix. Let $\la_0\in\Lambda(A)$ and let
$m_1\geq \ldots\geq m_s$ be its partial multiplicities (i.e., the sizes of the Jordan blocks 
associated to $\la_0$ in any Jordan form of $A$ in, perhaps, a extension field $\,\wt{\FF}$.
Then $\frac{\Delta_A}{\la_0}$ is an eigenvalue of $\Adj(A)$ with 
$m_1\geq \ldots\geq m_s$ as partial multiplicities.
\end{corollary}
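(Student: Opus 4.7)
The plan is to reduce Corollary \ref{coro.eigval} to Theorem \ref{thm.main}(i) after extending scalars to a field where all elementary divisors of $A$ split into linear powers.

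First, I would pass to an extension $\wt{\FF}$ (e.g.\ an algebraic closure, or simply the splitting field of $p_A(\la)$) in which every irreducible factor of $p_A(\la)$ is linear. Over $\wt{\FF}$, the partial multiplicities $m_1\geq\cdots\geq m_s$ of $\la_0$ are, by definition, the degrees of the elementary divisors of $A$ associated with $\la_0$; equivalently, these elementary divisors are $(\la-\la_0)^{m_i}$, $i=1,\ldots,s$. Because $A$ is nonsingular we still have $0\notin\Lambda(A)$, so Theorem \ref{thm.main}(i) applies and produces the elementary divisors $q_1(\la),\ldots,q_r(\la)$ of $\Adj(A)$ over $\wt{\FF}$ via formula \eqref{eq.elediv}.

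Next, I would specialize that formula to an elementary divisor of the form $p_j(\la)=(\la-\la_0)^m$ (writing $m$ for $m_i$). Using the reversal polynomial trick already exploited in the proof of Theorem \ref{thm.main},
\[
s_j(\la)=\la^m p_j(1/\la)=(1-\la_0\la)^m,
\]
and $q_j(\la)$ is the (monic) scalar multiple of $s_j(\la/\Delta_A)=(1-\la_0\la/\Delta_A)^m$. Factoring out the leading coefficient yields
\[
q_j(\la)=(\la-\Delta_A/\la_0)^m.
\]
Thus every elementary divisor $(\la-\la_0)^{m_i}$ of $A$ contributes the elementary divisor $(\la-\Delta_A/\la_0)^{m_i}$ to $\Adj(A)$, so $\Delta_A/\la_0$ is an eigenvalue of $\Adj(A)$ with partial multiplicities $m_1\geq\cdots\geq m_s$.

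The only bookkeeping issue, which I would address explicitly, is that elementary divisors of $A$ attached to \emph{different} eigenvalues of $A$ do not, after the transformation $\mu\mapsto \Delta_A/\mu$, accidentally collide and enlarge the list of partial multiplicities at $\Delta_A/\la_0$. This is immediate because the map $\mu\mapsto\Delta_A/\mu$ is a bijection on $\wt{\FF}^{\times}$, so distinct eigenvalues of $A$ yield distinct eigenvalues of $\Adj(A)$. The main computational step is the brief verification that $q_j(\la)=(\la-\Delta_A/\la_0)^m$; everything else is organizational.
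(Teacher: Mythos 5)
Your proof is correct and follows essentially the same route as the paper: pass to an extension field where the elementary divisors at $\la_0$ become $(\la-\la_0)^{m_i}$, then apply Theorem \ref{thm.main}(i) to see that these transform into $(\la-\Delta_A/\la_0)^{m_i}$. The only difference is that you spell out the specialization of \eqref{eq.elediv} (via the reversal polynomial) and the no-collision remark, both of which the paper leaves implicit.
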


\begin{proof}The elementary divisors of $A$ for the eigenvalue $\la_0$ in  $\wt{\FF}(\la)$ are
$(\la-\la_0)^{m_1}$, \ldots, $(\la-\la_0)^{m_s}$. Then, it follows from  item (i) of
Theorem \ref{thm.main} (see \eqref{eq.elediv}) that $\left(\la-\frac{\Delta_A}{\la_0}\right)^{m_1},
\ldots, \left(\la-\frac{\Delta_A}{\la_0}\right)^{m_s}$ are the corresponding elementary
divisors of $\Adj(A)$.
\end{proof}

\begin{corollary}\label{coro.TMg}
Let $A\in\FF^{n\times n}$, $\la_0\in\Lambda(A)\cap\FF$ and let $m_1\geq \ldots\geq m_s$ be
its partial multiplicities. Let $u,v\in\FF^{n\times 1}$ be arbitrary right
and left eigenvectors of $A$ for $\la_0$. Then
\begin{equation}\label{eq.TMg}
\Adj(\la_0 I_n- A)=\frac{(-\delta_{1s})^{m-1}}{m!}\;p^{(m)}_A(\la_0)\;\frac{uv^T}{v^T(\la_0I_n-A)^{m-1}u}
\end{equation}
where $m$ is the algebraic multiplicity of $\la_0$ and $\delta_{lj}$ is the Kronecker delta.
\end{corollary}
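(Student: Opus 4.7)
The plan is to deduce the corollary by applying Theorem \ref{thm.main} to the matrix $B=\la_0 I_n - A$, so that the distinguished eigenvalue becomes $0$ and the hypotheses of items (ii) and (iii) can be invoked directly.

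First I would translate the spectral data from $A$ to $B$. Because similarity commutes with the shift $\la_0 I_n - (\cdot)$, and $\la_0 I_m - J_m(\la_0) = -N_m$ has elementary divisor $\la^m$, the elementary divisors of $B$ with root $0$ are precisely $\la^{m_1},\ldots,\la^{m_s}$, while the remaining elementary divisors of $B$ are nonzero at $\la = 0$. Moreover, $u$ and $v$ are right and left eigenvectors of $B$ for the eigenvalue $0$, since $Au = \la_0 u$ and $v^T A = \la_0 v^T$. I would then split according to the value of $s$.

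If $s \geq 2$, there are at least two elementary divisors of $B$ vanishing at $0$, so Theorem \ref{thm.main}(ii) gives $\Adj(B) = 0$. On the other hand, $\delta_{1s} = 0$ and $m \geq s \geq 2$, so the scalar $(-\delta_{1s})^{m-1} = 0^{m-1} = 0$ and the right-hand side of \eqref{eq.TMg} is also zero. If $s = 1$, then $\la^m$ is the unique elementary divisor of $B$ at $0$, and Theorem \ref{thm.main}(iii) (with $d_k = m$) yields
\[
\Adj(\la_0 I_n - A) \;=\; \frac{(-1)^{n-1}}{m!}\, p_B^{(m)}(0)\, \frac{uv^T}{v^T B^{m-1} u}.
\]
Since $B^{m-1} = (\la_0 I_n - A)^{m-1}$, the denominator matches \eqref{eq.TMg} on the nose.

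The only remaining step, and the small bookkeeping obstacle, is to convert $p_B^{(m)}(0)$ into $p_A^{(m)}(\la_0)$. From $p_B(\la) = \det(\la I_n - B) = \det((\la - \la_0) I_n + A) = (-1)^n p_A(\la_0 - \la)$, differentiating $m$ times and evaluating at $0$ gives $p_B^{(m)}(0) = (-1)^{n+m} p_A^{(m)}(\la_0)$. Plugging this in produces the prefactor $(-1)^{n-1}(-1)^{n+m}/m! = (-1)^{m-1}/m!$, which for $s = 1$ is exactly $(-\delta_{1s})^{m-1}/m!$. Combining the two cases yields \eqref{eq.TMg}.
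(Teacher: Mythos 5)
Your proof is correct and follows essentially the same route as the paper: shift to $B=\la_0 I_n-A$, apply Theorem \ref{thm.main}(ii) when $s\geq 2$ and (iii) with $d_k=m$ when $s=1$, and convert signs via $p_B^{(m)}(0)=(-1)^{n+m}p_A^{(m)}(\la_0)$. Your explicit check that both sides vanish when $s\geq 2$ (using $m\geq s\geq 2$) is a small but welcome addition of detail over the paper's version.
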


\begin{proof}Put $B=\la_0 I_n-A$. Then $0\in\Lambda(B)$, $u$ and $v$
are right and left eigenvectors of $B$ for the eigenvalue $0$ and
$m_1\geq\ldots\geq m_s$ are the partial multiplicities of this eigenvalue.
By Theorem \ref{thm.main}, $\Adj(\la_0 I_n-A)=\Adj(B)\neq 0$ if and only if $s=1$.
In this case,
\[
\Adj(\la_0 I_n- A)=\Adj(B)=\frac{(-1)^{n-1}}{m!}p_B^{(m)}(0)\;\frac{uv^T}{v^TB^{m-1}u}.
\]
Therefore \eqref{eq.TMg} follows from the fact that $p^{(m)}_B(0)=(-1)^{n+m}p^{(m)}_A(\la_0)$
(see the proof of Theorem \ref{thm.1}).
\end{proof}

The following result is an immediate consequence of Corollary \ref{coro.TMg}.

\begin{corollary}\label{coro.TMgall}
Let ${A}\in\FF^{n\times n}$ and let $\Lambda(A)=\{\la_1,\ldots,\la_s\}$ be its spectrum.
Assume that $\Lambda(A)\subset\FF$ and let $m_j$ and $g_j$ be the algebraic and
geometric multiplicities of $A$ for the eigenvalue $\la_j$, $j=1,\ldots, s$.
Fix $k\in \{1,\dots, s\}$ and let $u_k$ and $v_k$ be right and
left eigenvectors of ${A}$ for $\la_k$. Then 
\begin{align}\label{eq..TMgall}
\Adj(\la_k\, {I}-{A})=(-\delta_{1g_k})^{m_k-1}{\prod_{j=1,\, j\neq k}^s}  (\la_k-\la_j)^{m_j}
\frac{u_kv^T_k}{v^T_k A^{m_k-1} u_k}.
\end{align}
\end{corollary}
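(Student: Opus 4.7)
The plan is to obtain this corollary by specializing Corollary \ref{coro.TMg} to the eigenvalue $\la_0=\la_k$ and then carrying out the only substantive step, namely writing $\frac{1}{m_k!}p_A^{(m_k)}(\la_k)$ explicitly in terms of the other eigenvalues of $A$. Because $\Lambda(A)\subset\FF$ by hypothesis, the Jordan structure of $A$ at $\la_k$ is entirely over $\FF$, so the integer $s$ of Corollary \ref{coro.TMg} (the number of partial multiplicities at $\la_k$) coincides with the geometric multiplicity $g_k$, and the integer $m$ there is the algebraic multiplicity $m_k$. Substituting these values into Corollary \ref{coro.TMg} immediately yields
\[
\Adj(\la_k I-A)=\frac{(-\delta_{1g_k})^{m_k-1}}{m_k!}\,p_A^{(m_k)}(\la_k)\,\frac{u_k v_k^T}{v_k^T A^{m_k-1} u_k},
\]
where the denominator is understood in the sense of the rank-one normalization established in the proof of Theorem \ref{thm.main}(iii).

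The sole computational step is to evaluate $p_A^{(m_k)}(\la_k)$. Using again that $\Lambda(A)\subset\FF$, the characteristic polynomial splits in $\FF[\la]$ as $p_A(\la)=(\la-\la_k)^{m_k}q(\la)$ with $q(\la)=\prod_{j=1,\,j\neq k}^s(\la-\la_j)^{m_j}$. Applying Leibniz's rule,
\[
p_A^{(m_k)}(\la_k)=\sum_{i=0}^{m_k}\binom{m_k}{i}\bigl[(\la-\la_k)^{m_k}\bigr]^{(i)}\Big|_{\la=\la_k}\,q^{(m_k-i)}(\la_k),
\]
and every term with $i<m_k$ vanishes because $(\la-\la_k)^{m_k}$ has a zero of order $m_k$ at $\la_k$; only $i=m_k$ survives, giving $p_A^{(m_k)}(\la_k)=m_k!\,q(\la_k)=m_k!\prod_{j\neq k}(\la_k-\la_j)^{m_j}$. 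The $m_k!$ cancels the $1/m_k!$ in the prefactor, producing exactly the product displayed in \eqref{eq..TMgall}.

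I do not expect any real obstacle. The Kronecker-delta prefactor $(-\delta_{1g_k})^{m_k-1}$ automatically handles the case distinction: it equals $(-1)^{m_k-1}$ when $g_k=1$, so Corollary \ref{coro.TMg} applies directly, while it vanishes whenever $g_k\geq 2$ (in which case $m_k\geq 2$ as well), consistently with part (ii) of Theorem \ref{thm.main} applied to $B=\la_k I-A$, which forces $\Adj(\la_k I-A)=0$. The whole argument is therefore just Leibniz's rule together with the substitution $s\leftrightarrow g_k$, $m\leftrightarrow m_k$ in Corollary \ref{coro.TMg}.
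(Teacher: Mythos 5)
Your route is the same one the paper intends: the paper gives no separate proof, presenting the corollary as an immediate consequence of Corollary \ref{coro.TMg}, and your two ingredients --- the identifications $m=m_k$, $s=g_k$ (number of partial multiplicities $=$ geometric multiplicity, legitimate since $\Lambda(A)\subset\FF$) and the Leibniz evaluation $\tfrac{1}{m_k!}p_A^{(m_k)}(\la_k)=\prod_{j\neq k}(\la_k-\la_j)^{m_j}$ from the splitting $p_A(\la)=(\la-\la_k)^{m_k}q(\la)$ --- are exactly the ``immediate'' steps. That part is correct and complete.

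The one step you gloss over is the denominator, and it is worth being explicit about it. A literal substitution into Corollary \ref{coro.TMg} yields the normalization $v_k^T(\la_k I_n-A)^{m_k-1}u_k$, whereas \eqref{eq..TMgall} displays $v_k^TA^{m_k-1}u_k=\la_k^{m_k-1}\,v_k^Tu_k$. These coincide when $m_k=1$, but in the only other case where the prefactor is nonzero ($g_k=1$, $m_k\geq 2$) the two expressions are different and, taken at face value with $u_k,v_k$ eigenvectors, both vanish: $(\la_kI_n-A)u_k=0$ kills the first, and $v_k^Tu_k=0$ (Remark \ref{rem.1}) kills the second. So the identification cannot be verified by a direct computation with eigenvectors; it has to be referred back to the scalar $\alpha_1\beta_1$ normalizing the generating vectors of the cyclic subspaces in the proof of Theorem \ref{thm.main}(iii). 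Your parenthetical ``understood in the sense of the rank-one normalization'' is the honest reading, but then ``substituting \dots\ immediately yields'' the displayed denominator overstates what Corollary \ref{coro.TMg} actually gives. This wrinkle is inherited from the paper rather than introduced by you; still, to make the derivation airtight you should either carry the denominator $v_k^T(\la_kI_n-A)^{m_k-1}u_k$ unchanged from Corollary \ref{coro.TMg}, or state explicitly that both denominators are shorthand for the same normalizing scalar defined in the proof of Theorem \ref{thm.main}(iii). (The cancellation of $m_k!$ against $1/m_k!$ should likewise be read as extracting the Taylor coefficient, as the paper itself does, if one worries about small positive characteristic.)
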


The TM formula \eqref{eq.TM1} can be used to provide an easy proof of the so-called 
eigenvector-eigenvalue identity (see \cite[Sec. 2.1]{BPTZ20}). In fact,
under the hypothesis of Theorem \ref{thm.1},  it follows from \eqref{eq.TM1} that
$w^T v[\adj(\la_0 I_n -A)]_{jj}= p'_A(\la_0)v_jw_j$, $j=1,\ldots, n$ (see \cite[Rem 5]{BPTZ20}).
Hence, recalling that for $j=1,\ldots, n$, $M_{jj}$ is the principal minor of $\la_0 I_n -A$
obtained by removing its $j$th row and column,
\be\label{eq.ev-ev1}
(w^T v)\;p_{M_{jj}}(\la_0)= p'_A(\la_0)\; v_jw_j,\quad j=1,\ldots, n.
\ee
In particular, if $A\in\CC^{n\times n}$ is Hermitian, $\la_1\geq \la_2\geq\cdots\geq \la_n$ are
its eigenvalues and, for $i=1,\ldots, n$,  $v_i=\begin{bmatrix} v_{i1} & v_{i2} & \cdots & v_{in}\end{bmatrix}^T$
is a unitary right and left eigenvector of $A$ for $\la_i$ (that is $Av_i=\la_i v_i$,
$v_i^\ast A=\la_iv_i^\ast$ and $v_i^\ast v_i=1$; (recall that we must change
\textit{transpose} by \textit{conjugate transpose} in the complex case) then
\[
|v_{ij}|^2p'_A(\la_i)=p_{M_{jj}}(\la_i), \quad i,j=1,\ldots, n.
\]
Equivalently, if $\mu_{j1}\geq \mu_{j2}\geq \cdots\geq \mu_{jn-1}$ are the eigenvalues of
$M_{jj}$,
\be\label{eq.ev-ev0}
|v_{ij}|^2\prod_{k=1,k\neq i}^n(\la_i-\la_k)=\prod_{k=1}^n (\la_i-\mu_{jk})\quad i,j=1,\ldots, n.
\ee
This is the classical eigenvector-eigenvalue identity (see \cite[Thm. 1]{BPTZ20}). 

As mentioned in Remark \ref{rem.1}, if $\FF$ is a field of characteristic zero and
$A\in\FF^{n\times n}$ then \eqref{eq.ev-ev1}  is meaningful if and only if $\la_0$ is a simple
eigenvalue. If $\la_0$ is defective and its geometric multiplicity is bigger than $1$
then \eqref{eq.ev-ev1} becomes a trivial identity because, in this case, $\Adj(\la_0 I_n-A)=0$
(item (ii) of Theorem \ref{thm.main}) and so $p_{M_{j}}(\la_0)=\det (\la_0 I_{n-1}-M_{j})=0$.
However, if  $\la_0$ is defective and its geometric multiplicity is $1$, then \eqref{eq.TMg}
can be used to obtain a generalization of the eigenvector-eigenvalue identity.  In fact,
one readily gets from \eqref{eq.TMg}:
\be\label{eq.ev-evg}
p_{M_{jj}}(\la_0)=\frac{(-\delta_{1g})^{m-1}}{m!}
p_A^{(m)}(\la_0) \frac{u_j v_j}{v^T(\la_0 I_n-A)^{m-1}u},\quad j=1,\ldots, n,
\ee
where $m$ and $g$ are the algebraic  and geometric multiplicities of $\la_0$, respectively. Moreover,
if both $p_A(\la)$ and  $p_{M_{jj}}(\la)$ split in $\FF$ then, with the notation of Corollary \ref{coro.TMgall}, 
the following identity follows from \eqref{eq..TMgall} for the non-repeated eigenvalues
$\{\mu_{j1},\ldots, \mu_{jr_j}\}$ of $M_{jj}$ and for $i=1,\ldots, s$:
\be\label{eq.ev-evgall}
\prod_{k=1}^{r_k}(\la_i-\mu_{jk})^{q_{jk}}=(-\delta_{1g_i})^{m_i-1}
\frac{u_{ij}v_{ij}}{v_i^T A^{m_i-1}u_i}\prod_{k=1,k\neq i}^s (\la_i-\la_k)^{m_k},
\quad j=1,\ldots, n,
\ee
where $u_i=\begin{bmatrix} u_{i1} & \cdots & u_{in}\end{bmatrix}^T$, 
$v_i=\begin{bmatrix} v_{i1} & \cdots & v_{in}\end{bmatrix}^T$, 
and $q_{jk}$ is the algebraic multiplicity of $\mu_{jk}$, $k=1,\ldots, r_j$ and $j=1,\ldots, n$.

\medskip
In the following section two additional applications will be presented.

\section{Two additional consequences of the TM formula}\label{sec.cons}

The well-known formula \eqref{derivative} below gives the derivative of a simple eigenvalue
of a matrix depending on a (real or complex) parameter. The investigation about the eigenvalue
sensitivity of matrices depending on one or several parameters can be traced back to the work
of Jacobi (\cite{Jacobi}). However a systematic study of the perturbation theory of the
eigenvalue problem starts with the books of Rellich (1953), Wilkinson (1965) and Kato (1966), 
as well as the papers by Lancaster \cite{Lan64}, Osborne and Michaelson \cite{OsMi64},
Fox and Kapoor \cite{FoKa68}, Crossley and Porter \cite{CroPo69} (see also \cite{Sun90} and
the references therein).
Since then this topic has become classical
as evidenced by an extensive literature including books and papers addressed to mathematicians
and a broad spectrum of scientist and engineers. In addition to the above early references, a short, 
and by no means exhaustive, list of books could include \cite[p. 463]{A64},
\cite[Ch. 8, Sec. 9]{MagNeud88}, \cite[Sec.4.2]{HinPrit05} or \cite[pp. 134-135]{L07}.

In proving \eqref{derivative}, one first must prove, of course,
that the eigenvalues smoothly depend on the parameter. It is also a common practice to prove
or assume (see \cite{Mag85},\cite[Ch. 11, Th. 2]{E10}  and the referred books), the existence
of eigenvectors which depend smoothly on the parameter. It is worth-remarking that  in the
proof by Lancaster in \cite{Lan64} only the existence of eigenvectors continuously depending
on the parameter is required. We propose a simple and alternative proof  of \eqref{derivative} 
where no assumption is made on the right and left eigenvector functions.

Let $D_\epsilon(z_0)$ be the open disc of radius $\epsilon>0$ with center $z_0$.
For the following result $\FF$ will be either the field of real numbers $\RR$
or of the complex numbers $\CC$.
Recall that $v\in\CC^{n\times 1}$ is a left eigenvector of $A\in\CC^{n\times n}$ for an eigenvalue
$z_0$ if $v^\ast A=z_0v^\ast$ where $v^\ast=\bar{v}^T$ is the transpose conjugate of $v$. 
Hence, we will change ${\rm }^T$ by ${\rm }^\ast$ to include complex vectors in our discussion.

\begin{proposition}\label{P1}
Let ${A}(\omega)\in\F^{n\times n}$ be a square matrix-valued function whose entries are analytic
at $\omega_0 \in \mathbb{C}$.  Let $z_0$ be a simple eigenvalue of ${A}(\omega_0)$. Then there exist
$\epsilon>0$ and $\delta>0$ so that $z : D_\epsilon(\omega_0) \to D_\delta(z_0)$ is the unique
eigenvalue of ${A}(\omega)$ with $z(\omega)\in D_\delta(z_0)$ for each $\omega \in D_\epsilon(\omega_0)$.
Moreover, $z$ is analytic on $D_\epsilon(\omega_0)$ and
\begin{align}\label{derivative}
z'(\omega)=\frac{v(\omega)^\ast {A}'(\omega)u(\omega)}{v(\omega)^\ast u(\omega)},
\end{align}
where, for $w\in D_\epsilon(\omega_0)$, $u(\omega)$ and $v(\omega)$ are arbitrary right
and left eigenvector, respectively, of ${A}$ for $z(\omega)$.
\end{proposition}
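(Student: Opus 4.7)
The plan is to combine the implicit function theorem for analytic functions with the TM formula proved earlier (specifically Theorem \ref{thm.1} applied inside a fixed disc around $z_0$).

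First I would establish existence, uniqueness and analyticity of $z(\omega)$. Consider the two-variable analytic function $p(z,\omega)=\det(zI_n-A(\omega))$. Since $z_0$ is a simple eigenvalue of $A(\omega_0)$, the partial derivative $\partial p/\partial z$ at $(z_0,\omega_0)$ equals $p'_{A(\omega_0)}(z_0)\neq 0$. The analytic implicit function theorem then yields $\epsilon,\delta>0$ and a unique analytic function $z\colon D_\epsilon(\omega_0)\to D_\delta(z_0)$ with $p(z(\omega),\omega)=0$ and $z(\omega_0)=z_0$. To upgrade ``unique analytic solution'' to ``unique eigenvalue in $D_\delta(z_0)$'' I would invoke continuity of the roots of $p(\cdot,\omega)$ (e.g.\ via Rouché): shrinking $\epsilon$ if necessary, exactly one root of $p(\cdot,\omega)$ lies in $D_\delta(z_0)$ for each $\omega\in D_\epsilon(\omega_0)$, and this root must coincide with $z(\omega)$.

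For the derivative formula I would differentiate the identity $p(z(\omega),\omega)=0$ to obtain
\[
z'(\omega)=-\frac{\partial p/\partial \omega}{\partial p/\partial z}(z(\omega),\omega).
\]
The denominator is $p'_{A(\omega)}(z(\omega))$. For the numerator I would use Jacobi's formula for the derivative of a determinant, giving
\[
\frac{\partial p}{\partial \omega}(z(\omega),\omega)=-\tr\bigl(\Adj(z(\omega)I_n-A(\omega))\,A'(\omega)\bigr).
\]
Because $z(\omega)$ stays simple on $D_\epsilon(\omega_0)$ (again shrinking $\epsilon$ if needed, since simplicity is an open condition), Corollary \ref{coro.TMg} in the case $m=1$, $s=1$ applies pointwise and supplies the rank-one representation
\[
\Adj(z(\omega)I_n-A(\omega))=\frac{p'_{A(\omega)}(z(\omega))}{v(\omega)^\ast u(\omega)}\,u(\omega)v(\omega)^\ast,
\]
where I pass from the transpose-version in Theorem \ref{thm.1} to the conjugate-transpose version by observing that if $v^\ast A=zv^\ast$ then $w=\bar v$ satisfies $w^T A=zw^T$, so $w^T u=v^\ast u$. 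This equality is obtained by simply replacing $w$ with $\bar{v}$ in \eqref{eq.TM1}; in particular $v^\ast u\neq 0$ because $\Adj(z(\omega)I_n-A(\omega))\neq 0$ and $p'_{A(\omega)}(z(\omega))\neq 0$.

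Substituting the rank-one expression into the trace, using $\tr(uv^\ast A')=v^\ast A'u$, the factor $p'_{A(\omega)}(z(\omega))$ cancels against the denominator and \eqref{derivative} follows. The main conceptual point that requires care is the translation between the $^T$-based TM identity of Theorem \ref{thm.1} and the $^\ast$-based statement of the proposition, together with justifying that $v(\omega)^\ast u(\omega)\neq 0$ throughout $D_\epsilon(\omega_0)$ so that both sides of \eqref{derivative} are well-defined; everything else is a routine application of the implicit function theorem and Jacobi's formula.
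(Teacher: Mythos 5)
Your proposal is correct and follows essentially the same route as the paper: the analytic implicit function theorem applied to $p(z,\omega)=\det(zI_n-A(\omega))$, Jacobi's formula for $\partial p/\partial\omega$, and the TM formula \eqref{eq.TM1} to turn $\tr\bigl(\Adj(z(\omega)I_n-A(\omega))A'(\omega)\bigr)$ into $p'(z(\omega),\omega)\,v(\omega)^\ast A'(\omega)u(\omega)/(v(\omega)^\ast u(\omega))$, with the factor $p'$ cancelling. The only differences are presentational: you spell out the Rouch\'e/root-continuity step and the passage from the $^T$-based identity to the $^\ast$-based one (via $w=\bar v$), both of which the paper leaves implicit.
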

\begin{proof}
Since $z_0$ is a simple root of $p(z, \omega)=\det(z\, {I}-{A}(\omega))$, by the analytic implicit function theorem, 
we have, in addition to the first part of the result, that
$$
z'(\omega)=\dps -\frac{\dps \frac{\partial p}{\partial \omega}(z(\omega), \omega)}
{\dps \frac{\partial p}{\partial z} (z(\omega), \omega)}.
 $$
By the Jacobi formula for the derivative of the determinant and TM formula \eqref{eq.TM1}, we have
(note that since $z(\omega)$ is a simple eigenvalue, $v(\omega)^\ast u(\omega)\neq 0$ for any
right and left eigenvectors $u(\omega)$ and $v(\omega)$)
\begin{align*}
\dps \frac{\partial p}{\partial z}(z(\omega), \omega)&=\tr(\adj(z(\omega)\, {I}-{A}(\omega))\\
&=p'(z(\omega), \omega)\\%\mathbf{v}(\omega)^\ast \mathbf{u}(\omega),\\
\dps \frac{\partial p}{\partial \omega}(z(\omega), \omega)&=-\tr(\Adj(z(\omega)\, {I}-{A}(\omega)) {A}'(\omega))\\
&=-p'(z(\omega), \omega)\frac{v(\omega)^\ast {A}'(\omega)u(\omega)}{v(\omega)^\ast u(\omega)},
\end{align*}
and the result follows.
\end{proof}

\begin{rem}{\rm
\begin{itemize}
\item[(a)]The same conclusion can be drawn in Proposition \ref{P1} if $A$ is a complex or real
matrix-valued differentiable  function of a real variable. In the first case, we would need
a non-standard version of the implicit function theorem like the one in 
\cite[Theorem 2.4]{AH82}. In the second case the standard  implicit function theorem is enough.
\item[(b)]  It is shown in \cite{AnChLa93} that the existence of eigenvectors smoothly depending on the
parameter can be easily obtained from the properties of the adjugate matrix. In fact,
since $z(\omega)$ is a simple eigenvalue of $A(\omega)$ for each $\omega\in D_\epsilon(\omega_0)$, 
$\rank (z(\omega) I_n- A(\omega))=n-1$ and so by the TM formula,
$\rank \Adj (z(\omega) I_n- A(\omega))=1$ (see Remark \ref{rem.1}). 
Now, $\Adj (z(\omega) I_n- A(\omega))$ is a
differentiable matrix function of $\omega\in D_\epsilon(\omega_0)$ and 
$(z(\omega) I_n- A(\omega))(\Adj (z(\omega) I_n- A(\omega)))=
(\Adj (z(\omega) I_n- A(\omega))) (z(\omega) I_n- A(\omega))=
\det(z(\omega) I_n- A(\omega))I_n=0$. Henceforth, all nonzero columns of
$\Adj (z(\omega) I_n- A(\omega)$, which are all proportional, are (right and left)
eigenvectors of $A(\omega)$ for $z(\omega)$.
\hfill$\Box$
\end{itemize}}
\end{rem}

The second application is related to the problem of characterizing the admissible eigenstructures
and, more generally, the similarity orbits of the rank-one updated matrices. There is a vast literature
on this problem. A non-exhaustive list of publications is  \cite{Thomp80,Silva88,Ion91,MoDo03,
BeHoZa05,MMRR11,BrCaUr15,Itziar20} and the references therein.  It is a consequence
of Theorem 2 in \cite{Thomp80} that if $\la_0$ is an eigenvalue of 
$A\in\FF^{n\times n}$ with geometric multiplicity $1$ and $\rank(B-A)=1$ then $\la_0$
may or may not be an eigenvalue of $B\in\FF^{n\times n}$. It is then proved in
\cite[Th. 2.3]{MMRR11} that in the complex case, generically, $\la_0$ is not an
eigenvalue of $B$. That is to say, there is a Zariski open set 
$\Omega\subset\CC^{n}\times \CC^n$ such that for all $(x,y)\in\Omega$, $\la_0$ is not
an eigenvalue of $A+xy^T$. With the help of the TM formula we can be a little more precise about
the set $\Omega$.  Form now on, $\FF$ will be again an arbitrary field.

\begin{proposition}\label{P2}
Let $A\in\F^{n\times n}$ and let $\la_0$ be an eigenvalue of $A$ in, perhaps, an extension
field $~\wt{\F}$. Assume that the geometric multiplicity of $\la_0$ is $1$ and its algebraic multiplicity
is $m$. Let $u_0,v_0\in\F^{n\times 1}$ be right and left eigenvectors of $A$ for $\la_0$. If
$x,y\in\F^{n\times 1}$ then $\la_0$ is an eigenvalue of $A+xy^T$ if and only if $y^Tu_0= 0$
or $v_0^Tx=0$.
\end{proposition}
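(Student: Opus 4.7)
The plan is to reduce the eigenvalue condition to a scalar equation involving $\Adj(\la_0 I_n - A)$ via the rank-one determinant identity, and then exploit the rank-one structure of that adjugate supplied by Corollary \ref{coro.TMg}.

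First I would set $B = \la_0 I_n - A$ and note that $\la_0$ is an eigenvalue of $A + xy^T$ precisely when $\det(B - xy^T) = 0$. Applying the matrix determinant lemma $\det(M + uv^T) = \det(M) + v^T\Adj(M)u$ (valid over any commutative ring) with $M = B$, $u = -x$, $v = y$, and using $\det(B) = 0$, the condition becomes
\begin{equation*}
y^T\,\Adj(\la_0 I_n - A)\,x = 0.
\end{equation*}

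Next I would invoke Corollary \ref{coro.TMg}. Since the geometric multiplicity of $\la_0$ is $1$, we are in the case $s = 1$ of that corollary, so
\begin{equation*}
\Adj(\la_0 I_n - A) = \alpha\,u_0 v_0^T,\qquad \alpha = \frac{(-1)^{m-1}}{m!}\,\frac{p_A^{(m)}(\la_0)}{v_0^T(\la_0 I_n - A)^{m-1} u_0}.
\end{equation*}
The denominator is nonzero by the same corollary, and $\alpha$ itself cannot vanish: geometric multiplicity $1$ is equivalent to $\rank(\la_0 I_n - A) = n-1$, which guarantees some $(n-1)\times(n-1)$ minor is nonzero, hence $\Adj(\la_0 I_n - A) \neq 0$.

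Substituting this rank-one representation turns the condition into $\alpha\,(y^T u_0)(v_0^T x) = 0$, and since $\alpha \neq 0$ this is equivalent to $y^T u_0 = 0$ or $v_0^T x = 0$, which is the claim. I do not anticipate a genuinely hard step: the matrix determinant lemma is classical, and the rank-one factorization of $\Adj(\la_0 I_n - A)$ is the very content of Corollary \ref{coro.TMg}. The only point demanding a moment of care is the nonvanishing of the scalar $\alpha$, which I handle via the rank argument above rather than by inspecting the factors of $\alpha$ separately, so the argument is uniform across characteristics.
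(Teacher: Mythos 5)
Your proposal is correct and follows essentially the same route as the paper: both reduce the question to the scalar condition $y^T\Adj(\la_0 I_n-A)\,x=0$ via the rank-one determinant identity $p_{A+xy^T}(\la_0)=y^T\Adj(\la_0 I_n-A)x$, and then substitute the rank-one expression for $\Adj(\la_0 I_n-A)$ given by Corollary \ref{coro.TMg}. The only cosmetic difference is that you justify the nonvanishing of the scalar factor by observing $\rank(\la_0 I_n-A)=n-1$ forces $\Adj(\la_0 I_n-A)\neq 0$, whereas the paper argues via $p_A^{(m)}(\la_0)\neq 0$; your variant is, if anything, slightly more robust.
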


\begin{proof}
Let $B=A+xy^T$. Then $\la I_n-A=\la I_n-B-xy^T$. Taking into account that
$\la I_n-B$ is invertible in $\F(s)^{n\times n}$, where $\F(s)$ the field of rational functions, and using
the formula of the determinant of  updated rank-one matrices, we get
\[
p_B(\la)=p_A(\la)+p_A(\la)y^T(\la I_n-A)^{-1}x=p_A(\la)+y^T \adj(\la I_n-A) x.
\]
In particular,
\begin{equation}\label{eq.detr1}
p_B(\la_0)=p_A(\la_0)+y^T \adj(\la_0 I_n-A) x= y^T \adj(\la_0 I_n-A) x.
\end{equation}
It follows from \eqref{eq.TMg} that (recall that $v_0 ^T(\la_0 I_n-A)^{m-1}u_0\neq 0$)
\[
p_B(\la_0)=\frac{(-1)^{m-1}}{m!}p_A^{(m)}(\la_0)\frac{y^Tu_0 v_0^T x}{v_0 ^T(\la_0 I_n-A)^{m-1}u_0}.
\]
Since $p_A^{(m)}(\la_0)\neq 0$, % and by item (iii) of Theorem \ref{thm.main}, 
%$v_0 ^T(\la_0 I_n-A)^{m-1}u_0\neq 0$, 
the Proposition follows.
\end{proof}

\begin{rem}\label{rem.3.4}{\rm
Note that, by \eqref{eq.detr1} and item (ii) of Theorem \ref{thm.main}, if the geometric multiplicity of
$\la_0$ as eigenvalue of $A$ is $2$ then $\Adj(\la_0 I_n-A)=0$ and so, $\la_0$ is necessarily an eigenvalue
of $A+xy^T$. This is an easy consequence of the interlacing inequalities of \cite[Th. 2]{Thomp80}. However,
proving that those interlacing inequalities are necessary conditions that the invariant polynomials of
$A$ and $A+xy^T$ must satisfy is by no means a trivial matter.
}\hfill$\Box$
\end{rem}

The eigenvalues of rank-one updated matrices are at the core of the
\textit{divide and conquer algorithm} to compute the eigenvalues of real symmetric
or complex hermitian matrices (see, for example,
\cite[Sec. 5.3.3]{Demm97}, \cite[Sec. 2.1]{Stew01}). At each step  of the algorithm a
diagonal matrix $D=D_1\oplus D_2$ and a vector $u\in\C^{n\times 1}$ are given such
that the eigenvalues and eigenvectors of $D+uu^\ast$ are to be computed. In order
the algorithm to run smoothly, it is required, among other things, that the diagonal
elements of $D$ are all distinct. Thus, a so-called \textit{deflation} process must be
carried out. This amounts to check at each step  the presence of repeated eigenvalues
and, if so, remove and save them. The result that follows is related to the problem of detecting
repeated eigenvalues but for much more general matrices over arbitrary fields.

\begin{proposition}\label{P3}
Let $A=A_1\oplus A_2$ with $A_i\in\F^{n_i\times n_i}$, $i=1,2$. Let $x,y\in\F^{n\times 1}$
and split $B=A+xy^T=\begin{bmatrix} B_{ij}\end{bmatrix}_{ij=1,2}$ into $2\times 2$ blocks
such that $B_{ii}\in\F^{n_i\times n_i}$, $i=1,2$.   Assume also that the eigenvalues of $A_1$
and $A_2$ have geometric multiplicity equal to  $1$ and
$\Lambda(A_1)\cap\Lambda(B_{11})= \Lambda(A_2)\cap\Lambda(B_{22})=\emptyset$.
Then
\[
\Lambda(A_1)\cap\Lambda(A_2)=\Lambda(B)\cap\Lambda(A_1)=\Lambda(B)\cap\Lambda(A_2).
\]
\end{proposition}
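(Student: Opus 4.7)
The plan is to split $x$ and $y$ compatibly with the block structure of $A$, translate the hypothesis into a nonvanishing condition via Proposition \ref{P2}, and then handle the two directions of the claimed set equalities by combining Proposition \ref{P2} with Remark \ref{rem.3.4}.

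First I would write $x=\begin{bmatrix}x_1\\ x_2\end{bmatrix}$ and $y=\begin{bmatrix}y_1\\ y_2\end{bmatrix}$ with $x_i,y_i\in\F^{n_i\times 1}$. A direct block multiplication then gives $B_{ii}=A_i+x_iy_i^T$ for $i=1,2$. Since every eigenvalue of $A_i$ has geometric multiplicity $1$ and, by hypothesis, none of them belongs to $\Lambda(B_{ii})$, Proposition \ref{P2} applied to each diagonal block supplies the key preparatory observation: for every $\mu\in\Lambda(A_i)$ and every choice of right and left eigenvectors $u_i,v_i$ of $A_i$ for $\mu$,
\[
y_i^T u_i\neq 0\qquad\text{and}\qquad v_i^T x_i\neq 0.
\]

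For the inclusions $\Lambda(A_1)\cap\Lambda(A_2)\subseteq\Lambda(B)\cap\Lambda(A_i)$, I would pick $\la_0\in\Lambda(A_1)\cap\Lambda(A_2)$. The vectors $\begin{bmatrix}u_1\\ 0\end{bmatrix}$ and $\begin{bmatrix}0\\ u_2\end{bmatrix}$ are then linearly independent right eigenvectors of $A$ for $\la_0$, so its geometric multiplicity in $A$ is at least $2$. By Remark \ref{rem.3.4} this forces $\Adj(\la_0 I_n-A)=0$, and inserting this into \eqref{eq.detr1} gives $p_B(\la_0)=0$; that is, $\la_0\in\Lambda(B)$, and hence $\la_0\in\Lambda(B)\cap\Lambda(A_i)$ for $i=1,2$.

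For the reverse inclusions I would argue by contraposition and take $\la_0\in\Lambda(A_1)\setminus\Lambda(A_2)$ (the case $\Lambda(A_2)\setminus\Lambda(A_1)$ is symmetric). Because $\la_0\notin\Lambda(A_2)$, the only right/left eigenvectors of $A$ for $\la_0$ are (up to scalars) $u=\begin{bmatrix}u_1\\ 0\end{bmatrix}$ and $v=\begin{bmatrix}v_1\\ 0\end{bmatrix}$, where $u_1,v_1$ are right and left eigenvectors of $A_1$; in particular the geometric multiplicity of $\la_0$ in $A$ is exactly $1$. Applying Proposition \ref{P2} to the whole matrix $A$ together with $x$ and $y$, the presence of $\la_0$ in $\Lambda(B)$ would force $y^Tu=y_1^Tu_1=0$ or $v^Tx=v_1^Tx_1=0$, each contradicting the preparatory observation. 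Thus $\la_0\notin\Lambda(B)$, and combining all inclusions delivers the claimed equalities. The main (small) obstacle is precisely the bookkeeping step of verifying that in this second half one is in the hypothesis regime of Proposition \ref{P2} — namely that the geometric multiplicity of $\la_0$ in the full block-diagonal $A$ drops to $1$ — which is what allows the nonvanishing of $y_1^Tu_1$ and $v_1^Tx_1$ to propagate from the block level to the full matrix level.
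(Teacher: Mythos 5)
Your proof is correct and follows essentially the same route as the paper: Remark \ref{rem.3.4} handles the common-eigenvalue (geometric multiplicity $2$) direction, and the reverse inclusion comes from the two applications of Proposition \ref{P2} (to the full matrix $A$ with $B=A+xy^T$, and to the diagonal block $A_i$ with $B_{ii}=A_i+x_iy_i^T$). The only difference is organizational — you extract the nonvanishing conditions $y_i^Tu_i\neq 0$, $v_i^Tx_i\neq 0$ from the hypothesis up front and then contradict them, whereas the paper derives $\la_0\in\Lambda(B_{11})$ and contradicts the hypothesis directly — which is the same argument in a different order.
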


\begin{proof}.- If $\la_0\in\Lambda(A_1)\cap\Lambda(A_2)$ then $\la_0$, as eigenvalue of $A$,
has geometric multiplicity $2$. By Remark \ref{rem.3.4}, 
$\la_0\in\Lambda(B)\cap\Lambda(A_1)\cap\Lambda(A_2)$. Assume that$\la_0\in\Lambda(B)\cap\Lambda(A_1)$ but $\la_0\not\in\Lambda(A_2)$. Let us see that this assumption leads to a contradiction. Let $u_0,v_0\in\FF^{n_1\times 1}$
be a right and a left  eigenvectors of $A_1$, respectively. Then
$w_0=\begin{bmatrix} u_0 ^T & 0\end{bmatrix}^T\in\FF^{n\times 1}$
and $z_0= \begin{bmatrix} w_0 ^T & 0\end{bmatrix}^T\in\FF^{n\times 1}$are right and
left eigenvectors of $A$, respectively, for $\la_0$. Since $\la_0\not\in\Lambda(A_2)$,
the geometric multiplicity of $\la_0$ as eigenvalue of $A$ 
is $1$. Then, by Proposition \ref{P2},  $ y^Tw_0=0$ or $z_0^Tx=0$ because
$\la_0\in\Lambda(B)$.  Let us assume
that $ y^Tw_0=0$, on the contrary we would proceed similarly with $z_0^Tx=0$.
If we put $y=\begin{bmatrix} y_1^T & y_2^T\end{bmatrix}^T$ and
$x=\begin{bmatrix} x_1^T & x_2^T\end{bmatrix}^T$, with $x_1,y_1\in\FF^{n_1\times 1}$,
then $y_1^T u_0=0$ and $B_{11}=A_{11}+x_1y_1^T$. It follows from Proposition \ref{P2}
that $\la_0\in\Lambda(B_{11})$, contradicting the hypothesis
$\Lambda(A_1)\cap\Lambda(B_{11})=\emptyset$. That 
$\Lambda(B)\cap\Lambda(A_2)\subset\Lambda(A_1)\cap\Lambda(A_2)$ is proved similarly.
\end{proof}

\begin{rem}\label{rem.3.6}{\rm
\begin{itemize}
\item[{(i)}]  Note that, with the notation of the proof of Proposition \ref{P3}, $B_{11}=A_1+x_1y_1^T$
and $B_{22}=A_2+x_2y_2^T$. Then, according to Proposition \ref{P2}, 
$\la_0\not\in\Lambda(B_{11})$ unless $(y_1^T u_0)(v_0^Tx_1)=0$.
Hence, the hypothesis  $\Lambda(A_1)\cap\Lambda(B_{11})= \emptyset$
is a generic property, and so is $\Lambda(A_2)\cap\Lambda(B_{22})=\emptyset$.

\item[{(ii)}] Consider Proposition \ref{P3} over $\mathbb{C}$. If $A$ and $B$ are both Hermitian or unitary, 
then $\Lambda(B)\setminus \big(\Lambda(A_1) \cap \Lambda(A_2) \big)$  and
$\Lambda(A_1)\cup \big(\Lambda(A_2) \backslash (\Lambda(A_1)\cap \Lambda(A_2))\big)$
strictly interlace on the real line or the unit circle, respectively (see, for example, 
\cite[Th. 2.1, Sec. 2]{Stew01}).\hfill$\Box$
\end{itemize}
}
\end{rem}

\bigskip

\bigskip

\bibliographystyle{plain}
 \bibliography{bib}
\end{document}